\newtheorem{theorem}{Theorem}[section]
\newtheorem{maintheorem}{Theorem}
\newtheorem{secondtheorem}{Theorem}
\newtheorem{Thirdtheorem}{Theorem}
\newtheorem{maincorollary}[maintheorem]{Corollary}
\newtheorem{secondcorollary}[secondtheorem]{Corollary}
\newtheorem{Thirdcorollary}[Thirdtheorem]{Corollary}
\newtheorem{corollary}[theorem]{Corollary}
\newtheorem{lemma}[theorem]{Lemma}
\theoremstyle{definition}
\newtheorem{example}[theorem]{Example}
\numberwithin{equation}{section}
\newcommand{\sing}{\textsf{Sing}}
\newcommand{\bb}{\text{BB}}
\newcommand{\cs}{\text{CS}}
\newcommand{\im}{\text{Im}}
\newcommand{\tr}{\text{Tr}}
\newcommand{\res}{\text{Res}}
\newcommand{\mc}[1]{\mathcal{#1}}
\begin{document}
\title[Dicritical singularities of Levi-flat hypersurfaces and foliations]{Existence of dicritical singularities of Levi-flat hypersurfaces and holomorphic foliations}
\author{Andr\'es Beltr\'an}
\address[A. Beltr\'an]{Dpto. Ciencias - Secci\'on Matem\'aticas, Pontificia Universidad Cat\'olica del Per\'u.}
\curraddr{Av. Universitaria 1801, San Miguel, Lima 32, Peru}
\email{abeltra@pucp.pe}
\author{Arturo Fern\'andez-P\'erez}
\address[A. Fern\'andez-P\'erez]{Departamento de Matem\'atica - ICEX, Universidade Federal de Minas Gerais, UFMG}
\curraddr{Av. Ant\^onio Carlos 6627, 31270-901, Belo Horizonte-MG, Brasil.}
\email{fernandez@ufmg.br}

\author{Hern\'an Neciosup}
\address[H. Neciosup]{Dpto. Ciencias - Secci\'on Matem\'aticas, Pontificia Universidad Cat\'olica del Per\'u.}
\curraddr{Av. Universitaria 1801, San Miguel, Lima 32, Peru}
\email{hneciosup@pucp.pe}
\thanks{This work is supported by the Pontificia Universidad Cat\'olica del Per\'u project VRI-DGI 2016-1-0018. Second author is partially supported by CNPq grant number 301825/2016-5}
\subjclass[2010]{Primary 32V40 - 32S65}
\keywords{Levi-flat hypersurfaces - Holomorphic foliations}

\begin{abstract}
We study holomorphic foliations tangent to singular real-analytic Levi-flat hypersurfaces in compact complex manifolds of complex dimension two. We give some hypotheses to guarantee the existence of dicritical singularities of these objects.  As consequence, we give some applications to holomorphic foliations tangent to real-analytic Levi-flat hypersurfaces with singularities in $\mathbb{P}^2$.
\end{abstract}
\maketitle
\section{Introduction}
\par In this paper we study holomorphic foliations tangent to singular real-analytic Levi-flat hypersurfaces in compact complex manifolds of complex dimension two with emphasis on the type of singularities of them. Singular Levi-flat hypersurfaces in complex manifolds appear in many contexts, for example the zero set of the real-part of a holomorphic function or as an invariant set of a holomorphic foliation. While singular Levi-flat hypersurfaces have many properties of complex subvarieties, they have a more complicated geometry and inherit many pathologies from general real-analytic subvarieties. The interconnection between singular Levi-flat hypersurfaces 
and holomorphic foliations have been studied by many authors, see for example \cite{burns}, \cite{brunella}, \cite{alcides}, \cite{normal}, \cite{arturo}, \cite{generic}, \cite{arnold}, \cite{libro}, \cite{lebl}, \cite{singularlebl} and \cite{shafikov}. 
\par Let $M$ be a real-analytic closed subvariety of real dimension 3 in a compact complex manifold $X$ of complex dimension two. Unless specifically stated, subvarieties
are analytic, not necessarily algebraic. Throughout the text, the term \textit{real-analytic hypersurface} will be employed with the
meaning \textit{real-analytic subvariety of real dimension 3}.
Let us denote $M ^{*}$ the set of points of $M$ near which $M$ is a nonsingular real-analytic hypersurface. $M$ is said to be {\textit{Levi-flat}} if the codimension one distribution on $M^{*}$
$$T^{\mathbb{C}}M^{*}=TM^{*}\cap J(TM^{*})\subset TM^{*}$$
is integrable, in Frobenius sense. It follows that $M^{*}$ is foliated locally by immersed one-dimensional complex manifolds, the foliation defined by $T^{\mathbb{C}}M^{*}$ is called the \textit{Levi-foliation} and will be denoted by $\mathcal{L}$. 
 \par Let $\{U_j\}_{j\in I}$ be an open covering of $X$. A \textit{holomorphic foliation} $\mc{F}$ on $X$ can be described by a collection of holomorphic 1-forms $\omega_j\in\Omega^{1}_{X}(U_{j})$ with isolated zeros such that 
\begin{equation*}
\omega_i=g_{ij}\omega_j\,\,\,\,\,\,\,\,\,\text{on}\,\,\,U_i\cap U_j,\,\,\,\,\,\,\,\,\,\,\,\,\,\,g_{ij}\in\mathcal{O}^{*}_{X}(U_i\cap U_j).
\end{equation*}
The cocycle $\{g_{ij}\}$ defines a line bundle $N_{\mc{F}}$ on $X$, called \textit{normal bundle} of $\mc{F}$.  The \textit{singular set} $\sing(\mc{F})$ of $\mc{F}$ is the finite subset of $X$ defined by 
$$\sing(\mc{F})\cap U_{j}=\text{zeros of}\,\,\,\omega_{j},\,\,\,\,\,\,\,\,\,\,\,\,\,\,\,\forall j\in I.$$
A point $q\not\in\sing(\mc{F})$ is said to be \textit{regular}. 
We will be denote $c_1(N_{\mc{F}})\in H^2(X,\mathbb{Z})$ the \textit{first Chern class} of $N_{\mc{F}}$ and if $\Omega$ is a smooth closed 2-form on $X$ which represents, in the De Rham sense, the first Chern class of $N_{\mc{F}}$, we will use the following notation
$$c_1^{2}(N_{\mc{F}}):=\int_{X}\Omega\wedge\Omega.$$ 
We shall say that a holomorphic foliation $\mathcal{F}$ on $X$ is \textit{tangent} to $M$ if locally the leaves of the Levi foliation $\mathcal{L}$ on $M^{*}$ are also
leaves of $\mathcal{F}$. A singular point $p\in M$ is called \textit{dicritical} if for every neighborhood $U$ of $p$, infinitely many leaves of the Levi-foliation on $M^{*}\cap U$ have $p$ in their closure. Analogously, a singular point $p\in\sing(\mc{F})$ of a holomorphic foliation $\mc{F}$ is called \textit{dicritical} if for every neighborhood $U$ of $p$,  infinitely many leaves have $p$ in their closure. Otherwise it is called \textit{non-dicritical}. 
\par Recently dicritical singularities of singular real-analytic Levi-flat hypersurfaces have been characterized in terms of the \textit{Segre varieties}, see for instance Pinchuk-Shafikov-Sukhov \cite{pinchuk}. Using this characterization, the notion of dicritical singularity in the theory of holomorphic foliations coincides with the notion of Segre-degenerate singularity of a real-analytic Levi-flat hypersurface and therefore, we can use results of residue-type indices associated to singular points of holomorphic foliations \cite{index}, \cite{suwa} to prove the following result. 
\begin{maintheorem}\label{main_theorem}
Let $\mc{F}$ be a holomorphic foliation on a compact complex manifold $X$ of complex dimension two tangent to an irreducible real-analytic Levi-flat hypersurface $M \subset X$ such that $\sing(\mc{F})\subset M$. Suppose $c^2_1(N_\mc{F})>0$. Then there exists a dicritical singularity $p \in M$ such that $\mc
{F}$ has a non-constant meromorphic first integral at $p$. 
\end{maintheorem}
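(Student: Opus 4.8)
The plan is to argue by contradiction, using the global Baum–Bott index theorem together with the reality constraints that the Levi-flat hypersurface forces on the local invariants of $\mc{F}$. Recall from \cite{suwa} that, since $\mc{F}$ has isolated singularities on the compact surface $X$, the residue formula
$$\sum_{p\in\sing(\mc{F})}\bb(\mc{F},p)=c_1^2(N_{\mc{F}})$$
holds, where $\bb(\mc{F},p)$ is the Baum–Bott index of $\mc{F}$ at $p$, i.e. the residue at $p$ of $(\tr)^2/\det$ of a local vector field generating $\mc{F}$. The hypothesis $c_1^2(N_{\mc{F}})>0$ thus says that the total Baum–Bott mass is strictly positive, and in particular real. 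Suppose now, for contradiction, that no $p\in\sing(\mc{F})$ admits a non-constant meromorphic first integral of $\mc{F}$. Since $\sing(\mc{F})\subset M$, every singularity of $\mc{F}$ lies on $M$, and by the Segre-variety characterization of dicritical points (\cite{pinchuk}) the absence of such a local first integral forces each $p$ to be non-dicritical, i.e. Segre-nondegenerate.

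The crucial local step, and the main obstacle, is to show that at every non-dicritical $p\in\sing(\mc{F})$ the Baum–Bott index is a non-positive real number, $\bb(\mc{F},p)\in\R_{\le 0}$. To establish this I would exploit the real structure carried by $M$: near a non-dicritical $p$ the finitely many Segre varieties are complex separatrices of $\mc{F}$ that are permuted by the conjugation intrinsic to a real defining equation of $M$, and this conjugation-invariance forces the eigenvalues of the linear part of a generating vector field to have a \emph{real} ratio $\lambda$. A classification of the linear models admitting an invariant Levi-flat hypersurface with a non-dicritical singularity then excludes the positive-ratio and rational-ratio cases (the latter would produce exactly the meromorphic first integral we are assuming absent), leaving $\lambda<0$ real. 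For such a reduced singularity one computes $\bb(\mc{F},p)=\lambda+2+\lambda^{-1}=2-(|\lambda|+|\lambda|^{-1})\le 0$ by the arithmetic–geometric mean inequality. For non-reduced or more degenerate singularities I would pass to a reduction of singularities and propagate the reality and sign of the Baum–Bott residue through the (necessarily non-dicritical) blow-ups, reducing to the simple case; controlling the sign through desingularization is the technically delicate point.

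Granting the displayed inequality, summing over $\sing(\mc{F})$ gives
$$c_1^2(N_{\mc{F}})=\sum_{p\in\sing(\mc{F})}\bb(\mc{F},p)\le 0,$$
contradicting $c_1^2(N_{\mc{F}})>0$. Hence at least one $p\in\sing(\mc{F})$ must admit a non-constant meromorphic first integral of $\mc{F}$. Its level fibers are infinitely many $\mc{F}$-invariant complex curves accumulating at $p$; as the leaves of the Levi-foliation $\mc{L}$ on $M$ are leaves of $\mc{F}$, infinitely many leaves of $\mc{L}$ have $p$ in their closure, so $p$ is a dicritical singularity of $M$ in the sense of the definition above, which also matches the Segre-degeneracy characterization of \cite{pinchuk}. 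This completes the proof.
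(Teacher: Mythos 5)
Your global skeleton is the same as the paper's: assume every singular point fails to be dicritical, prove the local inequality $\bb(\mc{F},p)\leq 0$ at each such point, and contradict the Baum--Bott formula $\sum_{p}\bb(\mc{F},p)=c_1^2(N_{\mc{F}})>0$. The genuine gap is that you do not actually prove the local inequality, which is the entire content of the paper's key lemma. The paper gets it by invoking Cerveau--Lins Neto \cite{alcides}: a \emph{non-dicritical} germ tangent to a real-analytic Levi-flat hypersurface admits a non-constant \emph{holomorphic} first integral $g$; hence the germ is a generalized curve foliation, Brunella's theorem \cite{index} gives $\bb(\mc{F},p)=\cs(\mc{F},S,p)$ for $S$ the union of separatrices, and writing $g=g_1^{\ell_1}\cdots g_k^{\ell_k}$ one computes explicitly
$$\bb(\mc{F},p)=-\sum_{1\leq i<j\leq k}\frac{(\ell_i-\ell_j)^2}{\ell_i\ell_j}\,[g_i,g_j]_p\leq 0.$$
Your substitute --- conjugation symmetry of the Segre varieties forces a real eigenvalue ratio, a ``classification of linear models'' excludes $\lambda>0$ and $\lambda$ rational, and degenerate points are handled by ``propagating the sign through a reduction of singularities'' --- breaks down exactly where the difficulty lies. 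First, your eigenvalue-ratio analysis presupposes two nonzero eigenvalues, so it is blind to saddle-nodes, whose Baum--Bott index is unconstrained in sign: the non-dicritical germ $x\,\partial/\partial x+y^2\,\partial/\partial y$ has $\bb=4>0$. Such germs must be excluded both at $p$ and in its reduction of singularities, and the only available mechanism for excluding them under Levi-flat tangency is precisely the holomorphic first integral of \cite{alcides}, which you never invoke. Second, the claim that a rational ratio ``would produce exactly the meromorphic first integral we are assuming absent'' is false in general: resonant saddles with ratio $-p/q$ need not admit any first integral; that implication again only holds via Cerveau--Lins Neto. Third, the Baum--Bott residue is not additive or sign-monotone under blow-up in any elementary way, so the ``propagation'' step you defer as technically delicate is not a routine verification --- it is the theorem.

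A smaller but real misstep is the opening reduction: you attribute to \cite{pinchuk} the implication that absence of a local meromorphic first integral forces $p$ to be non-dicritical. The Segre-variety characterization in \cite{pinchuk} concerns dicritical (Segre-degenerate) points of the hypersurface $M$; it does not produce or preclude first integrals of $\mc{F}$. The implication you need --- a dicritical foliation germ tangent to a Levi-flat hypersurface has a non-constant meromorphic first integral --- is part (1) of Cerveau--Lins Neto's theorem, which is also what the paper uses to close the argument once a dicritical singularity is found. In short: both halves of your proof (the non-dicritical sign estimate and the dicritical conclusion) need Theorem \ref{lins-cerveau}, and the proposal never uses it.
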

\par We emphasize that the existence of dicritical singularities of $M$ depends on the condition $c^2_1(N_\mc{F})>0$, because otherwise the result is false. In section \ref{examples_paper} we give some examples that show the importance of the condition $c^2_1(N_\mc{F})>0$. The condition $\sing(\mc{F})\subset M$ is used in the proof of Theorem \ref{main_theorem}, we do not know if this condition can be removed. 
\par In the sequel we apply Theorem \ref{main_theorem} to non-dicritical projective foliations, that is, holomorphic foliations on $\mathbb{P}^2$ with only non-dicritical singularities. 
 \begin{maincorollary}
Let $\mc{F}$ be a holomorphic foliation on $\mathbb{P}^2$ with only non-dicritical singularities. Then there are no singular real-analytic Levi-flat hypersurfaces in $\mathbb{P}^{2}$ tangent to $\mc{F}$ that contain $\sing(\mathcal{F})$.
\end{maincorollary}     
 
Now we apply Theorem \ref{main_theorem} for holomorphic foliations of degree 2  on $\mathbb{P}^2$ with a unique singularity.
\begin{secondcorollary}
Let $\mc{F}$ be a holomorphic foliation of degree 2 on $\mathbb{P}^2$ with a unique singular point $p$. Suppose that $\mc{F}$ is tangent to a singular real-analytic Levi-flat hypersurface $M\subset\mathbb{P}^2$ and $p\in M$. Then, up to automorphism,  $\mc{F}$ is given in affine coordinates $(x,y)\in\mathbb{C}^2$ by the 1-form
$$\omega=x^2dx+y^2(xdy-ydx).$$
Moreover, let $[x:y:z]$ be the homogenous coordinates of $\mathbb{P}^2$, then  $R=\frac{y^3-3x^2z}{3x^3}$ is a rational first integral for $\mc{F}$. 

\end{secondcorollary}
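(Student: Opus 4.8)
The plan is to feed $\mc{F}$ into Theorem~\ref{main_theorem} and then upgrade the local information it provides into a global rational first integral, after which an automorphism of $\mathbb{P}^2$ puts $\mc{F}$ in the stated form. First I would check the hypotheses of Theorem~\ref{main_theorem}: for a foliation of degree $2$ on $\mathbb{P}^2$ the normal bundle is $N_{\mc{F}}=\mathcal{O}_{\mathbb{P}^2}(2+2)=\mathcal{O}_{\mathbb{P}^2}(4)$, so $c_1^2(N_{\mc{F}})=16>0$. Since $\sing(\mc{F})=\{p\}\subset M$ (replacing $M$ by the irreducible component through $p$ if needed so that $M$ is irreducible), Theorem~\ref{main_theorem} applies: $p$ is a dicritical singularity of $M$ and $\mc{F}$ admits a non-constant meromorphic first integral $h=f/g$ on a neighborhood of $p$. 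I would also record that a degree $2$ foliation on $\mathbb{P}^2$ has $d^2+d+1=7$ singularities counted with multiplicity, so the unique singular point carries the whole budget, $\mu_p(\mc{F})=7$.

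The central step is to globalize $h$. Resolving the dicritical point by blowing up produces an exceptional component $E\cong\mathbb{P}^1$ transverse to the strict transform of the foliation, and $h$ restricts to $E$ as a non-constant map, so the leaves crossing $E$ form a one-parameter family of distinct invariant germs through $p$. Because $p$ is the \emph{only} singular point of $\mc{F}$, I would argue that these germs close up to algebraic leaves, producing infinitely many invariant algebraic curves; by the Darboux--Jouanolou theorem $\mc{F}$ then carries a rational first integral $R=P/Q\in\mathbb{C}(x,y,z)$, and $\mc{F}$ is defined by the pencil $\{\lambda P+\mu Q=0\}$ whose single base point is $p$. Matching the degree $2$ with the multiplicity $\mu_p=7$ (equivalently, the local intersection number of two generic members, which must equal $9$ and be concentrated at $p$) forces the generic member to be a cubic maximally tangent to $\{Q=0\}$ at $p$.

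It then remains to normalize the pencil. Placing $p=[0:0:1]$ and using $\mathrm{PGL}(3,\mathbb{C})$ to fix the tangent cone and the infinitely near structure of the fat base point, I expect the pencil to reduce to $\{\,y^3-3x^2z=c\,(3x^3)\,\}$, i.e.\ to the first integral $R(x,y,z)=(y^3-3x^2z)/(3x^3)$. A direct computation in the chart $z=1$ then gives
\[
dR=x^{-4}\big((x^2-y^3)\,dx+xy^2\,dy\big)=x^{-4}\,\omega,\qquad \omega=x^2dx+y^2(x\,dy-y\,dx),
\]
confirming simultaneously that $\mc{F}=\{\omega=0\}$ and that $R$ is a rational first integral.

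The main obstacle I anticipate is the globalization/classification step: rigorously turning the local meromorphic first integral at the sole dicritical singularity into infinitely many algebraic leaves (hence into a rational first integral), and then showing that the single-base-point condition leaves no moduli, so that the pencil is unique up to automorphism. An alternative that sidesteps the monodromy issues in the globalization would be to invoke the known classification of quadratic foliations of $\mathbb{P}^2$ possessing a single singular point and to select, via Theorem~\ref{main_theorem}, the dicritical member of that list.
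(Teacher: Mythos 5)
Your first step (checking $c_1^2(N_{\mc{F}})=16>0$, applying Theorem~\ref{main_theorem} to get a local meromorphic first integral at the unique singular point $p$) matches the paper exactly. But your central step --- globalizing that local first integral --- has a genuine gap. You assert that ``because $p$ is the only singular point of $\mc{F}$, these germs close up to algebraic leaves,'' but no mechanism is given, and none is easy to supply. A leaf containing a separatrix of the dicritical point is only known to be analytic near $p$; away from $p$ it could a priori accumulate on other leaves or on an exceptional minimal set, in which case its closure is not an analytic curve and Chow/Remmert--Stein do not apply. Ruling out such behavior for foliations of $\mathbb{P}^2$ is in general a hard (and partly open) problem, so the passage from ``local meromorphic first integral at the sole singularity'' to ``infinitely many algebraic leaves, hence Darboux--Jouanolou'' is precisely the point that would need a proof, not an expectation. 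The subsequent normalization is also asserted rather than proved: even granting a rational first integral $P/Q$, the claim that degree $2$ plus a single base point ``forces'' a pencil of cubics with no moduli is not justified (the relation between the degree of a foliation and the degree of a pencil with a highly degenerate base scheme does not pin down the pencil degree by itself).

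The paper avoids all of this by taking the route you mention only as a fallback alternative: it invokes the classification of Cerveau--D\'eserti--Garba Belko--Meziani \cite{deserti}, which says that a degree-$2$ foliation of $\mathbb{P}^2$ with a unique singular point is, up to automorphism, one of exactly four explicit $1$-forms $\omega_1,\dots,\omega_4$. The local meromorphic first integral furnished by Theorem~\ref{main_theorem} then eliminates three of them: $\omega_2$ and $\omega_3$ admit only transcendental (non-meromorphic) first integrals, and $\omega_4$ admits no meromorphic first integral at all, leaving $\omega_1=x^2dx+y^2(xdy-ydx)$, for which $R=(y^3-3x^2z)/3x^3$ is checked directly. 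So your proposal identifies the right entry point and the right classification theorem, but the argument you actually develop in between (algebraicity of leaves, uniqueness of the pencil) does not go through as sketched; to complete the proof you should carry out the elimination against the list in \cite{deserti} rather than attempt the globalization.
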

\par We say that a singularity $p$ of a germ of a real-analytic Levi-flat hypersurface $M$ is said to be \textit{semialgebraic}, if the germ of $M$ at $p$ is biholomorphic to a semialgebraic Levi-flat hypersurface. We recall that a real-analytic Levi-flat hypersurface is said to be \textit{semialgebraic}, if it is contained in a codimension one real-analytic subvariety defined by the vanishing of a real polynomial.   
\par To continue we apply Theorem \ref{main_theorem} to find \textit{semialgebraic} singularities of singular real-analytic Levi-flat hypersurfaces which are tangent to singular holomorphic foliations on compact complex manifolds of complex dimension two. Similarly results of algebraization of singularities of holomorphic foliations can be found in \cite{genzmer}. Recently in \cite{casale}, Casale considered the algebraization problem for \textit{simple dicritical singularities} of germs of holomorphic foliations. These singularities are those that become nonsingular after one blow-up and such that a unique leaf is tangent to the exceptional divisor with tangency order of one. Motived by \cite{casale}, we state the following result.

\begin{Thirdcorollary}
Let $\mc{F}$ be a holomorphic foliation on a compact complex manifold $X$ of complex dimension two tangent to an irreducible real-analytic Levi-flat hypersurface $M \subset X$ such that $\sing(\mc{F})\subset M$. Suppose that $c^2_1(N_{\mc{F}})>0$ and $\mc{F}$ has only a unique simple dicritical singularity $p\in M$. Then there exists an algebraic surface $V$, a rational
function $H$ on $V$ and a point $q\in V$ such that the germ of $M$  at $p$ is biholomorphic to a semialgebraic Levi-flat hypersurface $M'\subset V$ in a neighborhood of $q$.
\end{Thirdcorollary}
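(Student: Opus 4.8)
The plan is to combine Theorem \ref{main_theorem} with the surjectivity inherent to a dicritical point, in order to algebraize both the foliation and the real trace that $M$ cuts in the leaf space. First I would apply Theorem \ref{main_theorem}: since $c^2_1(N_{\mc{F}})>0$, the foliation $\mc{F}$ admits a non-constant meromorphic first integral near a dicritical point, and because $p$ is the unique simple dicritical singularity this point is $p$. Write $F=f/g$ with $f,g$ coprime germs at $p$. As the leaves of the Levi-foliation $\mc{L}$ are leaves of $\mc{F}$ and $p$ is dicritical, $M$ is locally a union of leaves of $\mc{F}$, i.e. of level sets of $F$; using the Segre-variety characterization of dicritical (Segre-degenerate) points one writes $M=F^{-1}(\gamma)$ for a germ of real-analytic curve $\gamma\subset\mathbb{P}^{1}$, say $\gamma=\{s:\Gamma(s,\bar s)=0\}$ with $\Gamma$ real-analytic near $\gamma$.

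The key step is to show that $\gamma$ is in fact real-algebraic. Let $\rho(z,\bar z)=\Gamma(F(z),\overline{F(z)})$ be a defining function of $M$, and let $\rho(z,w)$ be its complexification, holomorphic near $(p,\bar p)$ because $M$ is real-analytic at $p$. Then $\rho(z,w)=\Gamma(F(z),\check F(w))$ with $\check F(w)=\overline{F(\bar w)}$ holomorphic in $w$. Since $p$ is dicritical, $F$ is indeterminate at $p$, and as $z$ and $w$ are independent the pair $(F(z),\check F(w))$ takes every value of $\mathbb{P}^{1}\times\mathbb{P}^{1}$ as $(z,w)\to(p,\bar p)$. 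Holomorphy of $\rho$ at $(p,\bar p)$ therefore forces $\Gamma$ to extend to a meromorphic function on the compact surface $\mathbb{P}^{1}\times\mathbb{P}^{1}$, hence to a rational function, so that $\gamma$ is cut out by a real-algebraic equation.

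It remains to put $\mc{F}$ on an algebraic model. Here I would use that $p$ is a simple dicritical singularity: a single blow-up $\pi\colon(\tilde X,E)\to(X,p)$ resolves the indeterminacy of $F$, the exceptional divisor $E\cong\mathbb{P}^{1}$ is transverse to (non-invariant for) the strict transform $\tilde{\mc{F}}$, and $\tilde F|_{E}\colon E\to\mathbb{P}^{1}$ has degree one. Thus a neighborhood of $E$ is biholomorphic, fibration-preservingly, to a neighborhood of the exceptional line in the blow-up of $(\mathbb{C}^{2},0)$; contracting the $(-1)$-curve yields a biholomorphism $\Psi\colon(X,p)\to(\mathbb{C}^{2},0)$ conjugating $\mc{F}$ to the radial foliation, with rational first integral $H=y/x$ (take $V=\mathbb{C}^{2}\subset\mathbb{P}^{2}$ and $q=0$). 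Two first integrals of one foliation differ by an automorphism of the leaf space, which here is all of $\mathbb{P}^{1}$; hence $F=\phi\circ(H\circ\Psi)$ with $\phi$ a M\"obius map, and $\Psi(M)=H^{-1}(\phi^{-1}\gamma)$. Since $\phi^{-1}\gamma$ is again real-algebraic, $M':=\Psi(M)=\{\Gamma'(H,\bar H)=0\}$ is, after clearing denominators, contained in an algebraic Levi-flat hypersurface of $V$; that is, $M'$ is semialgebraic and biholomorphic to the germ of $M$ at $p$, as required.

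The main obstacle I anticipate is the second step, forcing $\gamma$ to be real-algebraic. The extension argument must be made rigorous despite $F$ being genuinely indeterminate at $p$ (so that $\rho$ is holomorphic there only after the real-analytic continuation of $M$ is invoked), and one must verify that $(F(z),\check F(w))$ really fills a full neighborhood of every point of $\mathbb{P}^{1}\times\mathbb{P}^{1}$; the Segre-variety description of dicriticalness is what makes this precise. A secondary technical point is the algebraic model of the last step, namely that a simple dicritical germ carrying a meromorphic first integral is holomorphically equivalent to the radial one, where the arguments of \cite{genzmer} and \cite{casale} are the natural tools.
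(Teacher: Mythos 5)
Your first two steps track the paper's own route: the paper likewise begins by applying Theorem \ref{main_theorem} to get a non-constant meromorphic first integral at $p$, and your ``fills up $\mathbb{P}^1\times\mathbb{P}^1$'' argument for the algebraicity of the leaf-space curve $\gamma$ is in substance a re-derivation of the result the paper simply cites at this point, namely \cite[Lemma 5.2]{lebl}. The genuine gap is your third step. You claim that for a simple dicritical singularity one blow-up produces a strict transform everywhere transverse to the exceptional divisor $E$, with $\tilde F|_{E}$ of degree one, so that the germ is biholomorphic to the radial foliation with first integral $y/x$. This contradicts the definition of simple dicritical singularity used in the paper (and in \cite{casale}): after one blow-up there is exactly one leaf \emph{tangent} to $E$, with tangency order one. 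Such a tangency is a critical point of $\tilde F|_{E}$, so $\tilde F|_{E}$ has degree at least two; moreover the tangency is a biholomorphism invariant (a conjugation of germs lifts to the blow-ups, preserves $E$, and preserves tangencies of the foliation with $E$), so no simple dicritical germ is conjugate to the radial foliation, which has no tangency at all. Concretely, $F=(y^{2}-x^{3})/x^{2}$ is a simple dicritical germ with rational first integral: in the chart $y=tx$ the strict transform is $2t\,dt-dx=0$, regular, with $E$ non-invariant and a single order-one tangency along $\{x=t^{2}\}$; downstairs this leaf is the cusp $\{y^{2}=x^{3}\}$, which cannot be sent to any leaf of the radial foliation by a biholomorphism.

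This is precisely where the paper invokes Casale's theorem (Theorem \ref{Casale_theorem}): a simple dicritical germ with a meromorphic first integral is biholomorphic to the foliation given by the level curves of \emph{some} rational function $H$ on \emph{some} algebraic surface $V$ near \emph{some} point $q$, and $(V,H,q)$ cannot in general be normalized to $(\mathbb{P}^{2},\,y/x,\,0)$; producing that algebraic model is the nontrivial content of \cite{casale}, not the ``secondary technical point'' you describe. Your argument is repaired by replacing your explicit radial model with Theorem \ref{Casale_theorem} and then running your second step (equivalently, \cite[Lemma 5.2]{lebl}) on $V$ with the rational first integral $H$: the image $M'$ of the germ of $M$ lies in the real-algebraic Levi-flat set obtained by clearing denominators in $\Gamma'(H,\bar H)=0$, which is exactly the paper's proof. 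A further small point: your claim that two first integrals differ by a M\"obius map requires the fibers of $F$ to be connected; in general they differ by post-composition with a rational self-map of $\mathbb{P}^{1}$, which still preserves the algebraicity of $\gamma$ but should be stated correctly.
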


\par Finally we state a result that guarantee the existence of dicritical singularities of $M$ in presence of invariant compact complex curves by $\mc{F}$. 
\begin{secondtheorem}\label{second}
Let $\mc{F}$ be a holomorphic foliation on a compact complex manifold $X$ of complex dimension two tangent to an irreducible real-analytic Levi-flat hypersurface $M \subset X$. Suppose that the self-intersection $C\cdot C>0$, where $C\subset M$ is an irreducible compact complex curve invariant by $\mc{F}$, then there exists a dicritical singularity $p \in \sing (\mc{F})\cap C$ such that $\mc
{F}$ has a non-constant meromorphic first integral at $p$.
\end{secondtheorem}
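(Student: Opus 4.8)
The plan is to transfer the global positivity $C\cdot C>0$ to the singular points of $\mc{F}$ lying on $C$ by means of the Camacho--Sad index theorem, and then to argue that only a dicritical singularity can account for that positivity. Since $C$ is an irreducible compact curve invariant by $\mc{F}$, the Camacho--Sad formula (see \cite{suwa}) reads
\begin{equation}\label{eq:cs}
C\cdot C=\sum_{p\in\sing(\mc{F})\cap C}\cs(\mc{F},C,p),
\end{equation}
so that $C\cdot C>0$ already guarantees $\sing(\mc{F})\cap C\neq\emptyset$, giving singular points to analyse. When $C$ is not smooth one reads \eqref{eq:cs} on the normalization, using the generalized Camacho--Sad index, which affects nothing below.

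The heart of the argument is a local statement: \emph{if $p\in\sing(\mc{F})\cap C$ is non-dicritical, then $\cs(\mc{F},C,p)$ is a non-positive (in fact rational) number.} The intuition I would develop is the following. Fix a small transversal $\Sigma$ to $C$ at a regular point near $p$ and consider the trace $\gamma=M\cap\Sigma$, a germ of real-analytic curve through $C\cap\Sigma$. Because $M$ is Levi-flat and tangent to $\mc{F}$, the arc $\gamma$ is invariant under the local holonomy $h$ of $\mc{F}$ around $p$, whose linear part is $e^{2\pi i\,\cs(\mc{F},C,p)}$. An irrational rotation admits no germ of invariant real-analytic curve through its fixed point, which already excludes irrational real indices; the genuine constraint, however, comes from the real-analyticity of $M$ along $C$ up to $p$ (equivalently, from the Segre variety at $p$ being non-degenerate), which rules out the focus-type and node-type models and leaves only saddle-type configurations, whose index along a separatrix is $\le 0$. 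I would package this as a lemma: in the reduced case through the normal-form/Segre-variety analysis of singular Levi-flats tangent to foliations, and in the non-reduced case through a Seidenberg reduction performed in a neighbourhood of $p$ only, followed by the standard transformation rule for $\cs$ under blow-up.

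Granting the local claim, the theorem is immediate. If every $p\in\sing(\mc{F})\cap C$ were non-dicritical, the right-hand side of \eqref{eq:cs} would be a sum of non-positive numbers, whence $C\cdot C\le 0$, contradicting the hypothesis $C\cdot C>0$. Hence at least one $p\in\sing(\mc{F})\cap C$ must be dicritical; and, exactly as in the proof of Theorem \ref{main_theorem}, a dicritical singularity of a foliation tangent to a real-analytic Levi-flat hypersurface admits a non-constant meromorphic first integral in a neighbourhood of $p$, which is the desired conclusion.

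The main obstacle is precisely this local index lemma. The delicate point is the passage from ``$h$ preserves a real-analytic arc'' to ``$\cs(\mc{F},C,p)\le 0$'': it must simultaneously \emph{allow} the finite-order resonant saddles, which do occur and do carry Levi-flats, while \emph{excluding} the focus-type and dicritical models, and for the latter the transversal holonomy alone is insufficient --- one really needs the real-analyticity of $M$ at $p$ (through the Segre variety) to eliminate them. Moreover the non-reduced case cannot be treated by blowing up $C$ globally, since that would lower $C\cdot C$ and destroy the very positivity being exploited, so the sign has to be extracted from a purely local reduction together with the blow-up transformation law for $\cs$. Once this sign is established, the remainder of the proof is a formal consequence of \eqref{eq:cs}.
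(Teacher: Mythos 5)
Your global skeleton matches the paper's: apply the Camacho--Sad formula (Theorem \ref{CS}) to $C$, show that non-dicritical singularities on $C$ contribute non-positive indices, derive a contradiction with $C\cdot C>0$, and then invoke Cerveau--Lins Neto at the resulting dicritical point to produce the meromorphic first integral. However, there is a genuine gap at what you yourself call the heart of the argument: the local lemma that $\cs(\mc{F},C,p)\le 0$ at every non-dicritical $p\in\sing(\mc{F})\cap C$ is never proved. Your sketch via the transversal holonomy and its linear part $e^{2\pi i\,\cs(\mc{F},C,p)}$, supplemented by a Segre-variety analysis to exclude ``focus-type'' models, is a heuristic program rather than an argument --- and you concede as much when you write that the holonomy alone is insufficient and that the passage from an invariant real-analytic arc to the sign of the index is the delicate point. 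As it stands, the proposal reduces the theorem to an unproven lemma.

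The idea you are missing is that the tangency with $M$ gives you this lemma almost for free, through the \emph{non-dicritical} case of Cerveau--Lins Neto (Theorem \ref{lins-cerveau}(2)): at a non-dicritical singularity $q$ tangent to the Levi-flat $M$, the foliation admits a non-constant \emph{holomorphic} first integral $g\in\mc{O}(U)$. Writing $g=g_1^{\ell_1}\cdots g_k^{\ell_k}$, the invariance of $C$ forces $C\cap U=\{g_i=0\}$ for some $i$, and from $dg=h\omega$ one computes directly
$$\cs(\mc{F},C,q)=-\sum_{j\neq i}\frac{\ell_j}{\ell_i}\,[g_i,g_j]_q\leq 0,$$
where $[g_i,g_j]_q$ is the intersection number at $q$. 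This is exactly the computation the paper performs (it is the Camacho--Sad analogue of Lemma \ref{baum-bott_signo}), and it sidesteps entirely the holonomy and reduction-of-singularities machinery you propose; in particular there is no need to blow up, so your worry about destroying the positivity of $C\cdot C$ never arises. Once this inequality is in hand, the rest of your argument (the contradiction with \eqref{eq:cs} and the final application of Theorem \ref{lins-cerveau}(1) at the dicritical point) is correct and coincides with the paper's.
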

\par To prove Theorem \ref{second} we use the Camacho-Sad index (cf. \cite{CS}) and a result of Cerveau-Lins Neto (see Theorem \ref{lins-cerveau}).  
\par We organize the paper as follows: in section \ref{indices} we review some definitions and results about indices of holomorphic foliations at singular points. In section \ref{Existence} we give the proof of Theorem \ref{main_theorem}. Section \ref{application} is devoted to show some applications of Theorem \ref{main_theorem}.
In section \ref{dicritical} we provide the proof of Theorem \ref{second}. Finally, in section \ref{examples_paper} we show some examples which show the importance of the hypotheses in theorems \ref{main_theorem} and \ref{second}. 
\section{Indices of holomorphic foliations}\label{indices}
\par In this section we state two important results on indices of holomorphic foliations: Camacho-Sad index \cite{CS} and 
the Baum-Bott index \cite{baum}. The first one concerns the computation of $C\cdot C$, where $C\subset X$ is an invariant compact curve by $\mathcal{F}$ and the second one concerns the computation of $c^{2}_1(N_{\mc{F}})$. More references for these index theorems can be found in \cite[Chapter V]{suwa}, see also \cite{birational} and \cite{index}.
\par First, we recall the definition of meromorphic and holomorphic first integral for holomorphic foliations. Let $\mc{F}$ be a singular holomorphic foliation on $X$. Recall that $\mathcal{F}$ admit a \textit{meromorphic} (\textit{holomorphic}) first integral at $p\in X$, if there exists a neighborhood $U$ of $p$ and  
a \textit{meromorphic} (\textit{holomorphic}) function $h$ defined in $U$ such that its indeterminacy (zeros)
set is contained in $\sing(\mc{F})\cap U$ and its level curves contain the leaves of $\mathcal{F}$ in $U$.

\subsection{Camacho-Sad index}
\par Let us consider a separatrix $C$ at $p\in X$. Let $f$ be a holomorphic function on a neighborhood of $p$ and defining $C =\{f=0\}$. 
We may assume that $f$ is reduced, i.e. $df\neq 0$ outside $p$. Then \cite{lins}, \cite{suwa} there are functions $g$, $k$ and a 1-form $\eta$ on a neighborhood of $p$ such that
$$g\omega=kdf+f\eta$$
and moreover $k$ and $f$ are prime, i.e. $k\neq 0$ on $C^{*}=C\setminus\{p\}$. 
The Camacho-Sad index \cite{CS} is defined as 
$$\cs(\mc{F},C,p)=-\frac{1}{2\pi i}\int_{\partial{C}}\frac{1}{k}\eta,$$
where $\partial{C}=C\cap S^{3}$ and $S^3$ is a small sphere around $p$; $\partial{C}$ is oriented as  a boundary of $S^{3}\cap B^4$, with $B^4$ a ball containing $p$. 
\par If $C\subset X$ is a compact complex curve invariant by $\mc{F}$, one has the formula due by Camacho-Sad. 
\begin{theorem}\cite[Camacho-Sad]{CS}\label{CS}
$$\sum_{p\in\sing(\mc{F})\cap C}\cs(\mc{F},C,p)=C\cdot C.$$
\end{theorem}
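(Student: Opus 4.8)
The plan is to read the Camacho--Sad index $\cs(\mc{F},C,p)$ as the residue at $p$ of a meromorphic connection on the line bundle $L=\mathcal{O}_X(C)|_C$, and then to invoke the residue theorem on the compact curve $C$, which identifies the total of the residues with $\deg L = C\cdot C$. Working with $L=\mathcal{O}_X(C)|_C$ (rather than the normal bundle) has the advantage that $\deg L = C\cdot C$ holds verbatim even when $C$ is singular, so I expect no separate normalization argument to be needed.

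First I would fix the local data. Cover a neighborhood of $C$ by charts $U_i$ with reduced defining functions $f_i$ for $C\cap U_i$, so that $L$ has transition cocycle $g_{ij}=f_i/f_j$, holomorphic and nonvanishing on the overlaps. By the description recalled before Theorem \ref{CS}, in each chart there are $g_i,k_i$ and a $1$-form $\eta_i$ with $g_i\omega = k_i\,df_i + f_i\eta_i$ and $k_i$ prime to $f_i$; I set $\theta_i := -\dfrac{1}{k_i}\,\eta_i\big|_{C}$, a meromorphic $1$-form on $C\cap U_i$, holomorphic off $\sing(\mc{F})\cap C$. The very definition of the index then gives $\res_p\theta_i = -\dfrac{1}{2\pi i}\oint \dfrac{1}{k_i}\eta_i = \cs(\mc{F},C,p)$ at each singular point $p\in U_i$.

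The crux is to show that $\{\theta_i\}$ is a meromorphic connection on $L$, i.e. $\theta_i-\theta_j = d\log g_{ij}$ on $C\cap U_i\cap U_j$. To see this I would compare the two local representations of $\omega$: from $f_i=g_{ij}f_j$ and $df_i = g_{ij}\,df_j + f_j\,dg_{ij}$ one rewrites $g_i\omega = k_i g_{ij}\,df_j + f_j\big(k_i\,dg_{ij}+g_{ij}\eta_i\big)$, and matching this against $g_j\omega = k_j\,df_j + f_j\eta_j$ (both being multiples of the same $\omega$) yields, after dividing and restricting to $C$, the relation between $k_i,k_j$ and the identity $\theta_i-\theta_j=d\log g_{ij}$. \emph{This compatibility is the main obstacle}: it is exactly where the foliation data and the normal/line-bundle data must be reconciled, and it relies on the primality of $k_i$ with $f_i$ to control the restriction to $C$. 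Once it is established, the correction term $d\log g_{ij}$ is holomorphic and contributes no residue, so the numbers $\res_p\theta_i=\cs(\mc{F},C,p)$ are intrinsic, independent of the chart.

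Finally I would apply the residue theorem for a meromorphic connection on a line bundle over a compact Riemann surface: for any such connection $\{\theta_i\}$ on $L$ one has $\sum_{p}\res_p\theta = \deg L$. Concretely this comes from patching the local forms $\theta_i$ by a partition of unity subordinate to $\{U_i\}$ into a global current and applying Stokes, the gluing error being governed by the $d\log g_{ij}$ whose periods compute $c_1(L)[C]=\deg L$; the orientation of $\theta_i$ is chosen (with the minus sign above) precisely so that $\res_p\theta_i=\cs(\mc{F},C,p)$. Combining $\res_p\theta_i=\cs(\mc{F},C,p)$ with $\deg L = C\cdot C$ then gives
\[
\sum_{p\in\sing(\mc{F})\cap C}\cs(\mc{F},C,p) \;=\; \sum_p \res_p\theta \;=\; \deg L \;=\; C\cdot C,
\]
which is the asserted formula.
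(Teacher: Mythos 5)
The paper itself does not prove Theorem \ref{CS}: it is quoted directly from Camacho--Sad \cite{CS}, with \cite{suwa}, \cite{birational}, \cite{index} offered as further references, so there is no internal proof to compare yours against. Judged on its own merits, your proposal is correct, and it is essentially the streamlined modern proof found in the references the paper cites (Suwa's book, Brunella's monograph), rather than Camacho--Sad's original 1982 argument, which localizes a $C^\infty$ representative of the Chern class of the normal bundle of a smooth invariant curve. The step you single out as the crux does go through exactly as you indicate: writing $g_i\omega=k_ig_{ij}\,df_j+f_j\bigl(k_i\,dg_{ij}+g_{ij}\eta_i\bigr)$ and comparing with $g_j\omega=k_j\,df_j+f_j\eta_j$, one finds that $\bigl(g_jk_ig_{ij}-g_ik_j\bigr)\,df_j$ is divisible by $f_j$; since $f_j$ is reduced this forces $g_jk_ig_{ij}=g_ik_j$ on $C$, and restricting the remaining identity to $C$ (where $df_j|_C=0$) and dividing by $g_ik_j$ --- legitimate off $\sing(\mc{F})$ because $k_j$ is prime to $f_j$ --- yields $\theta_i-\theta_j=d\log g_{ij}$ with your sign choice $\theta_i=-(\eta_i/k_i)|_C$. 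The same computation absorbs the nonvanishing factors $h_{ij}$ relating local generators $\omega_i=h_{ij}\omega_j$, so your tacit use of a single global $\omega$ (which does not exist on a compact $X$) is harmless. Your two sign conventions are also mutually consistent: with $\theta_i-\theta_j=+d\log g_{ij}$, subtracting $d\log s_i$ for a global meromorphic section $\{s_i\}$ of $\mathcal{O}_X(C)|_C$ produces a global meromorphic $1$-form of total residue zero, whence $\sum_p\res_p\theta=+\deg\bigl(\mathcal{O}_X(C)|_C\bigr)=C\cdot C$ as you assert (with the opposite, more common, convention for connection forms one would get $-\deg$, so the pairing of conventions matters and you got it right).

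One supplement is needed for the generality in which the paper actually uses the theorem (Theorem \ref{second} allows a possibly singular irreducible compact curve $C$): on a singular curve, ``meromorphic $1$-form'' and the residue theorem must be read on the normalization $\nu\colon\tilde{C}\to C$. One pulls back the $\theta_i$, uses $\deg\nu^*\bigl(\mathcal{O}_X(C)|_C\bigr)=\deg\bigl(\mathcal{O}_X(C)|_C\bigr)=C\cdot C$, and notes that the index at a singular point $p$ of $C$, defined by the integral over the link $\partial C$, equals the sum of the residues of $\nu^*\theta_i$ over $\nu^{-1}(p)$, since the link is the disjoint union of the links of the branches. This is routine, not a gap in the idea, but it should be said if the statement is to cover singular invariant curves.
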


\subsection{Baum-Bott index}
Let $\mc{F}$ be a holomorphic foliation with isolated singularities on $X$. Let $p\in X$ be a singular point of $\mc{F}$; near $p$ the foliation is given by a holomorphic  vector field $$v=F(x,y)\frac{\partial}{\partial{x}}+G(x,y)\frac{\partial}{\partial{y}}$$ or by a holomorphic 1-form $$\omega=F(x,y)dy-G(x,y)dx.$$  
\par Let $J(x,y)$ be the Jacobian matrix of $(F,G)$ at $(x,y)$, then following \cite{baum} one can define the Baum-Bott index at $p\in\sing(\mc{F})$ as
$$\bb(\mc{F},p)=\res_{p}\Big\{\frac{(\tr J)^2}{F\cdot G}dx\wedge dy\Big\}.$$
The Baum-Bott index depended only on the conjugacy class of the germ of $\mc{F}$ at $p$. For example when the singularity $p$ is non-degenerate then $$\bb(\mc{F},p)=\frac{(\tr J(p))^2}{\det J(p)}=\frac{\lambda_1}{\lambda_2}+\frac{\lambda_2}{\lambda_1}+2,$$ where $\lambda_1$ and $\lambda_2$ are the eigenvalues of the linear part $Dv(p)$ of $v$ at $p$. The set of separatrices $S$ through $p$ is formed by two transversal branches $C_1$ and $C_2$, both of them analytic. 
We note also that $$\cs(\mc{F},S,p)=\cs(\mc{F},C_1,p)+\cs(\mc{F},C_2,p)+2[C_1,C_2]_p=\frac{\lambda_1}{\lambda_2}+\frac{\lambda_2} {\lambda_1}+2,$$ where $[C_1,C_2]_{p}$ denotes the intersections number between the curves $C_1$ and $C_2$ at $p$. We remark that $\cs(\mc{F},C_i,p)$ are computed over $\partial{C}_{i}$ for each $i=1,2$ respectively. 
Thus $\bb(\mc{F},p)=\cs(\mc{F},S,p)$. Of course this remains valid for generalized curve foliations with non-dicritical singularities.
\begin{theorem}\cite[Brunella]{index}\label{brunella_index}
Let $\mc{F}$ be a non-dicritical germ of holomorphic foliation at $0\in\mathbb{C}^2$ and let $S$ be the union of all its separatrices. If $\mc{F}$ is a generalized curve foliation, then 
\begin{eqnarray*}
\bb(\mc{F},0)&=&\cs(\mc{F},S,0).
\end{eqnarray*}
\end{theorem}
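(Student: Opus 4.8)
The plan is to prove the identity by induction on the length of the reduction of singularities of $\mc{F}$, comparing how the two indices transform under a single blow-up and using the Camacho--Sad formula (Theorem \ref{CS}) on the exceptional divisor. Two structural facts about generalized curves drive the argument. First, a reduction of $\mc{F}$ produces no saddle-nodes, so every reduced singularity that appears is non-degenerate, with eigenvalues $\lambda_1,\lambda_2\neq 0$. Second, the reduction of $\mc{F}$ coincides with the embedded resolution of its separatrix $S$, which forces the algebraic multiplicity of the foliation and the multiplicity of the curve to match: $\nu:=\nu(\mc{F},0)=m-1$, where $m:=\mathrm{mult}_0(S)$, at every center. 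Both properties are inherited by $\tilde{\mc{F}}$ after each blow-up (the separatrices of $\tilde{\mc{F}}$ being the strict transforms of those of $\mc{F}$ together with the exceptional divisor), so the inductive hypothesis will apply at the new singular points.

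For the base case I take $\mc{F}$ already reduced. Being a non-dicritical generalized curve, the singularity is then non-degenerate, its separatrix $S$ consists of two smooth transverse branches, and the computation recorded just before Theorem \ref{brunella_index} already gives $\bb(\mc{F},0)=\frac{\lambda_1}{\lambda_2}+\frac{\lambda_2}{\lambda_1}+2=\cs(\mc{F},S,0)$.

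For the inductive step I blow up once, $\pi\colon\tilde X\to X$, at the origin; since the germ is non-dicritical the divisor $E=\pi^{-1}(0)$ is invariant. I first establish the two transformation laws. Dividing the pulled-back form $\pi^{*}\omega$ by the maximal power of the local equation of $E$ gives $N_{\tilde{\mc{F}}}=\pi^{*}N_{\mc{F}}\otimes\mathcal{O}(-\nu E)$, whence $c_1^2(N_{\tilde{\mc{F}}})=c_1^2(N_{\mc{F}})-\nu^2$ and, by the surface Baum--Bott formula,
\[
\bb(\mc{F},0)=\sum_{q\in E}\bb(\tilde{\mc{F}},q)+\nu^2 .
\]
On the other hand, since the self-intersection of the strict transform drops by $m^2$, the Camacho--Sad index of the strict transform satisfies
\[
\cs(\mc{F},S,0)=\sum_{q\in E}\cs(\tilde{\mc{F}},\tilde S,q)+m^2 .
\]
The key point is that the separatrix of $\tilde{\mc{F}}$ at each $q$ is $\tilde S\cup E$, so the inductive hypothesis reads $\bb(\tilde{\mc{F}},q)=\cs(\tilde{\mc{F}},\tilde S\cup E,q)$. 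Expanding the right-hand side with the additivity rule $\cs(\tilde{\mc{F}},\tilde S\cup E,q)=\cs(\tilde{\mc{F}},\tilde S,q)+\cs(\tilde{\mc{F}},E,q)+2(\tilde S\cdot E)_q$ and summing over $q$, I invoke Theorem \ref{CS} on the compact invariant curve $E$ to get $\sum_q\cs(\tilde{\mc{F}},E,q)=E\cdot E=-1$, together with $\sum_q(\tilde S\cdot E)_q=\tilde S\cdot E=m$. Combining the three displays yields $\bb(\mc{F},0)-\cs(\mc{F},S,0)=\nu^2-(m-1)^2$, which vanishes because $\nu=m-1$, closing the induction.

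The main obstacle is the bookkeeping in the inductive step: the two correction constants $\nu^2$ and $m^2$ do not agree, and the equality is rescued only after the exceptional divisor $E$ is correctly incorporated into the separatrix of $\tilde{\mc{F}}$ and its Camacho--Sad contribution is evaluated by $E\cdot E=-1$. The technical points on which the argument rests are thus the additivity formula for $\cs$ along a reducible separatrix, the multiplicity matching $\nu=m-1$ characteristic of generalized curves, and the absence of saddle-nodes in the reduction, which guarantees that the base case is genuinely non-degenerate so that the direct computation applies.
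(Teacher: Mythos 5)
The paper gives no proof of this statement at all: it is quoted verbatim from Brunella \cite{index}, so there is no internal argument to compare against. Your proposal is correct, and it is essentially a reconstruction of the standard (Brunella-type) proof: induction on the length of the reduction, the base case being the non-degenerate computation $\mathrm{BB}=\frac{\lambda_1}{\lambda_2}+\frac{\lambda_2}{\lambda_1}+2=\mathrm{CS}$ already recorded in the paper, and the inductive step balancing the discrepancy $\nu^2-(m-1)^2=0$ via the additivity of the Camacho--Sad index, the relation $E\cdot E=-1$ from Theorem \ref{CS} applied to the exceptional divisor, and $\tilde{S}\cdot E=m$. Two points deserve explicit attribution rather than the quick justifications you give: the equalities $\mathrm{BB}(\mathcal{F},0)=\sum_q \mathrm{BB}(\tilde{\mathcal{F}},q)+\nu^2$ and $\mathrm{CS}(\mathcal{F},S,0)=\sum_q \mathrm{CS}(\tilde{\mathcal{F}},\tilde{S},q)+m^2$ are local statements about residues, so deriving them from the global Baum--Bott formula and from self-intersection numbers is only heuristic in a germ setting (they are true and standard, proved by localization, e.g.\ in Brunella's book \cite{birational}); and the identity $\nu=m-1$ together with ``reduction of $\mathcal{F}$ equals resolution of $S$'' is precisely the Camacho--Lins Neto--Sad equidesingularization theorem, which is the deep input driving the whole argument and should be cited as such.
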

\par We recall that the foliation, induced by $v$, is said to be \textit{generalized curve} at $0\in\mathbb{C}^2$ if there are no saddle-nodes in its reduction of singularities. It is easily seen that Theorem \ref{brunella_index} is false for saddle-nodes singularities. 

 The following formula will used in the proof of Theorem \ref{main_theorem}.
\begin{theorem}\cite[Baum-Bott]{baum}\label{baum_bott}
$$\sum_{p\in\sing(\mc{F})}\bb(\mc{F},p)=c^{2}_1(N_{\mc{F}}).$$
\end{theorem}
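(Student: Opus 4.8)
The plan is to prove this as a localization (residue) theorem in the spirit of Baum and Bott, concentrating the characteristic class $c^2_1(N_{\mc{F}})$ at the singular set by means of a partial connection along the leaves. Over $X\setminus\sing(\mc{F})$ the foliation yields a short exact sequence of holomorphic vector bundles
\[
0\to T_{\mc{F}}\to TX\to N_{\mc{F}}\to 0,
\]
where $T_{\mc{F}}$ is the line bundle tangent to the leaves. The essential ingredient is the \emph{Bott partial connection} on $N_{\mc{F}}$: for a section $s$ with local lift $\tilde{s}$ to $TX$ and a vector field $v$ tangent to $\mc{F}$, set $\nabla^{B}_{v}s$ equal to the class of $[v,\tilde{s}]$ in $N_{\mc{F}}$. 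Integrability of $\mc{F}$ makes this a flat partial holomorphic connection in the leaf directions.

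First I would extend $\nabla^{B}$ to a smooth connection $\nabla$ on $N_{\mc{F}}$ over $X\setminus\sing(\mc{F})$ and invoke Bott's lemma: the curvature $\Theta$ of $\nabla$ satisfies $\iota_{v}\Theta=0$ for every $v$ tangent to $\mc{F}$. Hence at each point $\Theta$ annihilates the leaf directions and is supported on the two real transverse (conormal) directions alone; being a $2$-form on a $2$-dimensional space, it satisfies $\Theta\wedge\Theta=0$ identically. Therefore the Chern--Weil $4$-form $\big(\tfrac{i}{2\pi}\big)^{2}\Theta\wedge\Theta$ representing $c^{2}_1(N_{\mc{F}})$ vanishes on $X\setminus\sing(\mc{F})$.

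Next comes the localization. Choose disjoint balls $B_{p}$ around the points $p\in\sing(\mc{F})$ and a global $C^{\infty}$ connection on $N_{\mc{F}}$ that coincides with $\nabla$ outside $\bigcup_{p}B_{p}$; let $\varphi(\nabla)$ denote the resulting Chern--Weil $4$-form. By the previous step $\varphi(\nabla)$ is supported in $\bigcup_{p}B_{p}$, so
\[
c^{2}_1(N_{\mc{F}})=\int_{X}\varphi(\nabla)=\sum_{p\in\sing(\mc{F})}\int_{B_{p}}\varphi(\nabla),
\]
and by Stokes' theorem each summand depends only on the germ of $\mc{F}$ at $p$. It remains to identify this local contribution with $\bb(\mc{F},p)$. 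Writing $\mc{F}$ near $p$ as $v=F\partial_{x}+G\partial_{y}$ and trivializing $N_{\mc{F}}$ by the section dual to $v$, the Bott connection matrix and its curvature are expressed through the Jacobian $J$ of $(F,G)$; inserting these into the $c^2_1$ polynomial and passing to the boundary sphere $\partial B_{p}$ via the Cauchy--Grothendieck residue formula yields
\[
\int_{B_{p}}\varphi(\nabla)=\res_{p}\Big\{\frac{(\tr J)^{2}}{F\cdot G}\,dx\wedge dy\Big\}=\bb(\mc{F},p).
\]

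The main obstacle I anticipate is this last local computation: transforming the Chern--Weil integral over $B_{p}$ into the explicit Grothendieck residue, and verifying that the answer is independent of the auxiliary choices (the extension of $\nabla^{B}$, the radius of $B_{p}$, and the trivialization). This requires care precisely at the non-reduced and strongly degenerate singularities, where $J$ fails to be invertible and the naive eigenvalue expression $\tfrac{\lambda_1}{\lambda_2}+\tfrac{\lambda_2}{\lambda_1}+2$ is unavailable, so that the local term must be handled as a genuine Grothendieck point residue of the meromorphic form rather than a closed-form evaluation.
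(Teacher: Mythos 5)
This statement is not proved in the paper at all: it is quoted as a classical result from Baum--Bott \cite{baum}, so your attempt can only be compared with the standard proof in the literature (\cite{baum}, or \cite[Ch.~V--VI]{suwa}). Your overall strategy --- Bott partial connection, vanishing of the Chern--Weil $4$-form off $\sing(\mc{F})$, localization to balls around the singularities, identification of each local contribution with a Grothendieck residue --- is exactly that standard route. However, your key vanishing step contains a genuine error. Bott's lemma does \emph{not} say that $\iota_{v}\Theta=0$ for every $v$ tangent to $\mc{F}$; it says only that $\Theta(v,w)=0$ when \emph{both} arguments are tangent to the leaves, i.e.\ that $\Theta$ lies in the ideal generated by the conormal forms, and there is no evident way to arrange your stronger pointwise condition globally (it holds for a connection pulled back from a local leaf space, but patching by a partition of unity destroys it, since curvature is not affine in the connection). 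This matters here because the leaves have \emph{real} codimension two: the conormal ideal is generated by the two real forms $\omega$ and $\bar{\omega}$, where $\omega$ is a local holomorphic $1$-form defining $\mc{F}$, so the honest conclusion of Bott's lemma allows $\Theta=\omega\wedge\alpha+\bar{\omega}\wedge\beta$, whose square $2\,\omega\wedge\alpha\wedge\bar{\omega}\wedge\beta$ need not vanish. Real Bott vanishing kills Pontryagin forms only in degree $>4$, which is precisely not enough for the $4$-form representing $c_{1}^{2}(N_{\mc{F}})$.

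The missing idea --- and the actual point of Baum--Bott in the holomorphic category --- is to use the complex structure to cut the effective codimension down to one: since $N_{\mc{F}}$ is a holomorphic bundle, the partial connection is defined and flat along the larger involutive distribution $T_{\mc{F}}\oplus T^{0,1}X$ (Bott derivative in the leaf directions, the $\bar{\partial}$-operator in the antiholomorphic directions), whose annihilator inside the complexified cotangent space is the \emph{complex rank-one} space spanned by $\omega$. For any smooth extension of this partial connection the curvature then has the form $\Theta=\omega\wedge\alpha$, and $\Theta\wedge\Theta=0$ follows at once. With that correction, your localization and Stokes argument goes through. Finally, note that your last step --- converting $\int_{B_{p}}\varphi(\nabla)$ into $\res_{p}\bigl\{\frac{(\tr J)^{2}}{F\cdot G}\,dx\wedge dy\bigr\}$, uniformly in the degenerate cases where the Jacobian is singular --- is only sketched in your write-up; that computation is the bulk of the work in \cite{baum} and \cite{suwa}, but it is a known (if laborious) argument, whereas the vanishing step as you stated it would actually fail.
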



\section{Existence of dicritical singularities of Levi-flat hypersurfaces}\label{Existence}

\par In order to prove Theorem \ref{main_theorem} we need the following results.
\begin{theorem}[Cerveau-Lins Neto \cite{alcides}]\label{lins-cerveau}
 Let $\mathcal{F}$ be a germ at $0\in\mathbb{C}^{n}$, $n\geq{2}$, of codimension one holomoprhic foliation tangent to a germ of an irreducible real-analytic hypersurface $M$. Then $\mathcal{F}$ has a non-constant meromorphic first integral. In the case of dimension two we can precise more:
\begin{enumerate}
\item If $\mc{F}$ is dicritical then it has a non-constant meromorphic first integral.
\item If $\mc{F}$ is non-dicritical then it has a non-constant holomorphic first integral.
\end{enumerate}
\end{theorem}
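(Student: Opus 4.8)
The plan is to extract from the tangency of $\mathcal{F}$ to $M$ an invariant family of complex hypersurfaces — the Segre varieties of $M$ — and then to build the first integral as the map sending a point to its Segre variety. I will describe the case $n=2$, where the Levi leaves are curves; the general case is parallel with curves replaced by hypersurfaces.

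First I would fix a reduced real-analytic defining function $\rho$ for $M$ near $0$ and a holomorphic $1$-form $\omega$ defining $\mathcal{F}$. Tangency means that along $M^{*}$ the complex tangent distribution $\ker(\partial\rho)$ is contained in $\ker\omega$; since in dimension two both are complex lines, this forces $\omega\wedge\partial\rho$ to vanish on $M=\{\rho=0\}$. Using that $M$ is irreducible, so that $\rho$ may be taken minimal, one writes $\omega\wedge\partial\rho=\rho\,\Theta$. Replacing the antiholomorphic variable $\bar z$ by an independent variable $w$ complexifies $\rho$ to a holomorphic germ $F(z,w)$ and turns this identity into the holomorphic relation $\omega\wedge d_{z}F=F\,\eta$ on $(\mathbb{C}^{2}\times\mathbb{C}^{2},0)$. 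Freezing $w=a$, this says exactly that every Segre variety $Q_{a}=\{z:F(z,a)=0\}$ is invariant by $\mathcal{F}$. Because $M$ has real codimension one it is never a complex hypersurface, so the family $\{Q_{a}\}$ is non-trivial, and we obtain an infinite, holomorphically parametrized family of $\mathcal{F}$-invariant curves.

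Next I would produce the first integral from the Segre map $\Lambda\colon z\mapsto [F(z,\cdot)]$, sending $z$ to the projective class of the coefficients of $F(z,\cdot)$, i.e.\ to the variety $Q_{z}^{*}=\{a:F(z,a)=0\}$. The key point is that $\Lambda$ is constant on the leaves of $\mathcal{F}$: if $z,z'$ lie on a common leaf $L$, then $L=Q_{a}$ for generic $a\in Q_{z}^{*}$, whence $z'\in Q_{a}$ and therefore $a\in Q_{z'}^{*}$; as this holds for generic $a$ on the curve $Q_{z}^{*}$ we get $Q_{z}^{*}\subseteq Q_{z'}^{*}$, and by symmetry equality. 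Thus $\Lambda$ is a meromorphic map constant along the leaves; since the Segre family is non-trivial $\Lambda$ is non-constant, so its generic rank is exactly one, and any non-constant ratio of two of its homogeneous coordinates is a non-constant meromorphic first integral of $\mathcal{F}$. This proves the first assertion.

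Finally, for the dichotomy in dimension two I would analyze the resulting meromorphic first integral $h=P/Q$. If $0$ is an indeterminacy point, i.e.\ $P(0)=Q(0)=0$, then resolving the pencil $\{P-cQ=0\}$ makes the exceptional divisor over $0$ dominate $\mathbb{P}^{1}$, producing a dicritical component and hence infinitely many separatrices through $0$; so in the non-dicritical case $0$ is not an indeterminacy point, $Q$ is a unit, and $h$ is already a holomorphic first integral, while in the dicritical case $h$ is the required meromorphic one. I expect the main obstacle to lie in the second paragraph: justifying that $\omega\wedge\partial\rho$ is genuinely divisible by $\rho$ (a real-analytic Nullstellensatz issue, best handled after complexification), controlling the singular points of $M$ and of $\mathcal{F}$ where the Segre varieties degenerate, and verifying that $\Lambda$ is meromorphic of generic rank one — equivalently, a local Darboux-type statement for the invariant family built in the first step.
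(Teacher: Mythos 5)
This statement is not proved in the paper at all: it is imported verbatim from Cerveau--Lins Neto \cite{alcides} and used as a black box, so the only fair comparison is with the proof in that reference. That proof is of a completely different nature from yours: it proceeds by reduction of singularities of $\mathcal{F}$, a careful study of holonomy groups of germs of diffeomorphisms of $(\mathbb{C},0)$ that preserve a germ of real-analytic curve (the trace of $M$ on a transversal to a leaf), and Mattei--Moussu-type arguments to integrate the foliation. Your Segre-variety route is therefore genuinely different from the source. Its first half is essentially correct and standard: the invariance of the Segre varieties $Q_a$ under $\mathcal{F}$, the symmetry $z\in Q_a\Leftrightarrow a\in Q_z$, and the fact that the leaf through a regular point is contained in every nondegenerate $Q_a$ passing through it (containment, not equality, is what you need and what is true) are exactly the facts behind the Pinchuk--Shafikov--Sukhov characterization of dicritical points that the paper itself cites; the real Nullstellensatz and density issues you flag are real but manageable for a minimal defining function of $\overline{M^{*}}$.

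The genuine gap is the passage from the invariant Segre family to a meromorphic first integral. Your argument yields, for $z,z'$ on a common leaf, only the equality of \emph{sets} $Q_z^{*}=Q_{z'}^{*}$, and this does not make the projectivized coefficient map $\Lambda(z)=[F(z,\cdot)]$ constant along leaves: two holomorphic functions of $w$ with the same zero set differ by a unit $u(w)$, not by a multiplicative constant, so the ratios $c_\alpha(z)/c_\beta(z)$ of Taylor coefficients that you propose as first integrals need not be constant on leaves. To repair this you would need a canonical defining function of $Q_z^{*}$ (say a reduced monic Weierstrass polynomial) and then prove that its coefficients depend meromorphically on $z$ across the loci where degrees jump and where $Q_z^{*}$ degenerates --- and degeneration does occur, precisely at dicritical (Segre-degenerate) points. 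This is a Cartan--Grauert-type ``meromorphic quotient'' problem, and it is exactly where the difficulty of the theorem is concentrated; it cannot be waved through by general principles, because the existence of infinitely many invariant curves through $0$ alone is well known \emph{not} to imply a meromorphic first integral (generic dicritical germs have none), so the holomorphic parametrization of the family must be exploited in an essential and nontrivial way. By contrast, your final dichotomy paragraph (an indeterminacy point of $h=P/Q$ forces infinitely many separatrices, hence in the non-dicritical case $h$ is holomorphic) is fine once a meromorphic $h$ has actually been produced.
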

Now we prove the following lemma. 
\begin{lemma}\label{baum-bott_signo}
Let $\mc{F}$ be a germ of a non-dicritical holomorphic foliation at $0\in\mathbb{C}^2,$ tangent to a germ of an irreducible real-analytic Levi-flat hypersurface $M$ at $0\in\mathbb{C}^2$. Then the Baum-Bott index satisfies 
$\bb(\mc{F},0)\leq 0.$
\end{lemma}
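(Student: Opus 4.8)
The plan is to convert the hypotheses into the existence of a holomorphic first integral and then read off the sign of the Baum--Bott index from the eigenvalue data of the reduced foliation. Since $M$ is irreducible and $\mc{F}$ is non-dicritical and tangent to $M$, Theorem \ref{lins-cerveau}, case (2), furnishes a non-constant \emph{holomorphic} first integral $f$ of $\mc{F}$ at $0$. I would next observe that a germ carrying a holomorphic first integral can have no saddle--node in its resolution (a saddle--node admits no convergent first integral along its weak separatrix), so $\mc{F}$ is a generalized curve foliation. Hence Theorem \ref{brunella_index} applies and gives $\bb(\mc{F},0)=\cs(\mc{F},S,0)$, where $S$ is the union of the separatrices of $\mc{F}$. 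The lemma is thereby reduced to the inequality $\cs(\mc{F},S,0)\le 0$.

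For the sign I would pass to the reduction of singularities $\pi\colon(\tilde X,\mc{D})\to(\mathbb{C}^2,0)$, with exceptional divisor $\mc{D}=\bigcup_k D_k$; as $\mc{F}$ is non-dicritical every $D_k$ is invariant. The pulled-back first integral $\tilde f=f\circ\pi$ is a first integral of the reduced foliation, and at each reduced singularity it is a monomial $u^{p}w^{q}$ in suitable coordinates, so the eigenvalue ratio there is the \emph{negative} rational $-p/q$. Consequently every local Camacho--Sad index along an invariant branch is a negative rational; the prototype is $f=x^{a}y^{b}$, for which a direct computation gives $\cs(\mc{F},S,0)=-\tfrac{(a-b)^2}{ab}\le 0$, with equality exactly when the two multiplicities coincide. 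It remains to assemble these local contributions in the general case.

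Here the two structural inputs I would use are, first, that the intersection matrix $(D_k\cdot D_l)$ of the exceptional divisor is negative definite, and second, a fiber relation: writing the zero divisor of the holomorphic function $\tilde f$ as $(\tilde f)_0=\sum_i n_i\tilde S_i+\sum_k m_k D_k$, with $\tilde S_i$ the strict transforms of the branches of $S$, the divisor $(\tilde f)_0=\tm{div}(\tilde f)$ is principal, hence numerically trivial on the compact curves $D_k$, so $\sum_i n_i(\tilde S_i\cdot D_k)+\sum_l m_l(D_l\cdot D_k)=0$ for every $k$. Applying Theorem \ref{CS} to each compact $D_k$ reproduces exactly this relation and identifies the reduced eigenvalue ratios with the multiplicities $n_i,m_k$; combining this Zariski-lemma type negativity with the negative local indices along the $\tilde S_i$ should yield $\cs(\mc{F},S,0)\le 0$.

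The hard part will be precisely this last assembly. The Camacho--Sad index is foliation-dependent rather than an intrinsic self-intersection of $S$, and a single blow-up contributes a \emph{positive} multiplicity-square term to $\cs(\mc{F},S,0)$, so the inequality cannot be obtained term by term. The delicate point is to package the local indices into a single intersection-theoretic quantity on $\tilde X$ whose sign is forced by the negative (semi)definiteness above together with the principal-fiber relation $(\tilde f)_0\cdot D_k=0$, while correctly handling the non-compact strict-transform branches $\tilde S_i$. Once this is settled, $\bb(\mc{F},0)=\cs(\mc{F},S,0)\le 0$ follows.
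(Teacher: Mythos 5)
Your first half coincides exactly with the paper's proof: Theorem \ref{lins-cerveau}(2) gives a holomorphic first integral, the absence of saddle-nodes makes $\mc{F}$ a generalized curve foliation, and Theorem \ref{brunella_index} reduces the lemma to $\cs(\mc{F},S,0)\le 0$. From there, however, what you offer is a strategy rather than a proof, and you say so yourself: the ``assembly'' of the local data on the resolution into the sign of $\cs(\mc{F},S,0)$ is precisely the content of the lemma, and it is left undone. The obstacles you flag are genuine: each blow-up contributes a positive term $+m^2$ to the index; the Camacho--Sad relation (Theorem \ref{CS}) applied to each exceptional component $D_k$ merely reproduces the principal-divisor identity $(\tilde f)_0\cdot D_k=0$, so by itself it yields no inequality; and negative definiteness of the matrix $(D_k\cdot D_l)$ does not apply directly to a divisor supported partly on the non-compact strict transforms $\tilde{S}_i$. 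Closing this would require real additional bookkeeping (controlling the end indices $-m_k/n_i$ against the accumulated blow-up corrections), none of which is carried out. As written, the key inequality is unproved.

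The paper closes this gap without any resolution of singularities, by a closed formula that is the direct generalization of your prototype $x^a y^b$. Write $g=g_1^{\ell_1}\cdots g_k^{\ell_k}$ for the holomorphic first integral, so that $S=\bigcup_{j}C_j$ with $C_j=\{g_j=0\}$; dividing $dg=h\omega$ by $g_1^{\ell_1-1}\cdots g_k^{\ell_k-1}$ gives the presentation $\sum_{j}\ell_j g_1\cdots\widehat{g_j}\cdots g_k\,dg_j=h_1\omega$, from which one computes directly
\begin{equation*}
\cs(\mc{F},S,0)=-\sum_{1\le i<j\le k}\frac{(\ell_i-\ell_j)^2}{\ell_i\ell_j}\,[g_i,g_j]_0\le 0,
\end{equation*}
where $[g_i,g_j]_0$ is the intersection number of $C_i$ and $C_j$ at the origin; your computation $-(a-b)^2/(ab)$ is the case $k=2$, $[g_1,g_2]_0=1$. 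The mechanism is pairwise: the index along a single branch $C_i$ is $-\sum_{j\neq i}\frac{\ell_j}{\ell_i}[g_i,g_j]_0$ (this is exactly what the paper uses again in the proof of Theorem \ref{variotional}), and summing over branches together with the cross terms $+2[g_i,g_j]_0$ coming from the additivity of the Camacho--Sad index over the components of $S$ produces perfect squares, $-\frac{\ell_j}{\ell_i}-\frac{\ell_i}{\ell_j}+2=-\frac{(\ell_i-\ell_j)^2}{\ell_i\ell_j}$, per intersection point. This factorization-of-the-first-integral formula is the missing idea: it performs the whole ``assembly'' at the origin, with no need for intersection theory on the resolution.
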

\begin{proof}
Since $\mc{F}$ is non-dicritical at $0\in\mathbb{C}^2$, we have $\mc{F}$ has a non-constant holomorphic first integral $g\in\mathcal{O}_2$ by Theorem \ref{lins-cerveau}. In particular, $\mc{F}$ is a generalized curve foliation and Theorem \ref{brunella_index} implies that 
$$\bb(\mc{F},0)=\cs(\mc{F},S,0),$$
where $S$ is the union of all separatrices of $\mc{F}$ at $0\in\mathbb{C}^2$. To prove the lemma we need calculate  $CS(\mc{F},S,0)$. In fact, as $dg\wedge\omega=0$, then $dg=h\omega$, where $h\in\mathcal{O}_2$. Moreover, if $g=g^{\ell_1}_1\cdots g^{\ell_k}_{k}$, we get $S=\displaystyle\bigcup^{k}_{j=1} C_{j}$ with $C_j=\{g_j=0\}$ and 
$$\sum^{k}_{j=1}\ell_jg_1\cdots\widehat{g_j}\cdots g_k dg_j=h_1\omega,$$
where $h_1=\frac{h}{g^{\ell_1-1}_1\cdots g^{\ell_k-1}_k}$. Hence $$\bb(\mc{F},0)=\cs(\mc{F},S,0)=-\sum_{1\leq i< j\leq k}\frac{(\ell_i-\ell_j)^2}{\ell_i\ell_j}[g_i,g_j]_{0}\leq 0,$$
here $[g_i,g_j]_{0}$ denotes the number of intersections between $C_i$ and $C_j$ at $0\in\mathbb{C}^2$.
\end{proof}
To continue we prove Theorem \ref{main_theorem}.
\subsection{Proof of Theorem \ref{main_theorem}}
We use the Theorem \ref{baum_bott} and Theorem \ref{lins-cerveau} to prove that $\mc{F}$ has a dicritical singularity in $X$. In fact, suppose by contradiction that $\sing(\mc{F})$ consists only of non-dicritical singularities. Take any point $p\in\sing (\mc{F})$ and let $U$ be a small neighborhood of $p$ in $X$  such that $\mc{F}$ is represented by a holomorphic 1-form $\omega$ on $U$ and $p$ is an isolated singularity of $\omega$. Since $\mc{F}$ and $M$ are tangent in $U$ and $p\in M$, we have $\mc{F}|_{U}$ admits a holomorphic first integral $g\in\mc{O}(U)$, that is, $\omega\wedge dg=0$ on $U$, by Theorem \ref{lins-cerveau}. 
\par Applying Lemma \ref{baum-bott_signo}, we get $\bb(\mc{F},p)\leq 0$, for any $p\in\sing(\mc{F})$. But Baum-Bott's formula implies that
$$\sum_{p\in\sing(\mc{F})}\bb(\mc{F},p)=c^{2}_{1}(N_{\mc{F}})>0,$$
which is absurd. Therefore, there exists a dicritical singularity $p$ of $\mc{F}$. Applying again Theorem \ref{lins-cerveau}, we obtain a non-constant meromorphic first integral for $\mc{F}$ in a neighborhood of $p$.

\section{Applications of Theorem \ref{main_theorem}}\label{application}
\par First we apply Theorem \ref{main_theorem} to holomorphic foliations of $\mathbb{P}^2$ with only non-dicritical singularities.
\begin{corollary} 
Let $\mc{F}$ be a holomorphic foliation on $\mathbb{P}^2$ with only non-dicritical singularities. Then there are no singular real-analytic Levi-flat hypersurfaces in $\mathbb{P}^{2}$ tangent to $\mc{F}$ that contain $\sing(\mathcal{F})$. 
\end{corollary}
\begin{proof}
Suppose by contradiction that $\mc{F}$ is tangent to a singular real-analytic Levi-flat hypersurface $M\subset\mathbb{P}^2$ such that $\sing(\mc{F})\subset M$. Since $N_{\mc{F}}=\mathcal{O}_{\mathbb{P}^2}(d+2)$, where $d$ is a positive integer, one has $c^{2}_{1}(N_{\mc{F}})=(d+2)^{2}>0$. Therefore $\mc{F}$ has a dicritical singularity by Theorem \ref{main_theorem}. But it is absurd with the assumption.  
\end{proof}
\par The Jouanolou foliation $\mathcal{J}_d$ of degree $d$ on $\mathbb{P}^2$, is given in affine coordinates $(x,y)\in\mathbb{C}^2$ by 
$$\omega_d=(y^{d}-x^{d+1})dy-(1-x^{d}y)dx.$$
It is well known that $\mathcal{J}_d$ belongs to a holomorphic foliations class of $\mathbb{P}^2$ without algebraic solutions, this means, $\mathcal{J}_d$ does not admit invariant algebraic curves, see for instance \cite{lins}.  
\par On the other hand, we know that $\mathcal{J}_d$ is a foliation with only non-dicritical singularities, because $\mathcal{J}_d$ has $d^{2}+d+1$ singularities and for each singularity passing only two analytic separatrices. Hence the above corollary shows that $\mathcal{J}_d$ does not tangent to any singular real-analytic Levi-flat hypersurface of $\mathbb{P}^2$.
\par Now we apply Theorem \ref{main_theorem} to holomorphic foliations of degree 2 on $\mathbb{P}^2$ with only one singularity. 
\begin{corollary}
Let $\mc{F}$ be a holomorphic foliation of degree 2 on $\mathbb{P}^2$ with a unique singular point $p$. Suppose that $\mc{F}$ is tangent to a singular real-analytic Levi-flat hypersurface $M\subset\mathbb{P}^2$ and $p\in M$. Then, up to automorphism,  $\mc{F}$ is given in affine coordinates $(x,y)\in\mathbb{C}^2$ by the 1-form
$$\omega=x^2dx+y^2(xdy-ydx).$$
Moreover, let $[x:y:z]$ be the homogenous coordinates of $\mathbb{P}^2$, then  $R=\frac{y^3-3x^2z}{3x^3}$ is a rational first integral for $\mc{F}$. 

\end{corollary}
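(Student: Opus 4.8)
The plan is to combine Theorem \ref{main_theorem} with the known classification of degree two foliations of $\pp{2}$ having a single singular point. First I would record that for a degree two foliation the normal bundle is $N_{\mc{F}}=\mc{O}_{\pp{2}}(4)$, so that $c^2_1(N_{\mc{F}})=16>0$. Replacing $M$ by the irreducible component through $p$ if necessary (the foliation remains tangent to each component, and $\sing(\mc{F})=\{p\}$ lies in that component), the hypotheses of Theorem \ref{main_theorem} are satisfied. Hence Theorem \ref{main_theorem} produces a dicritical singularity of $\mc{F}$; since $p$ is the only singular point, $p$ itself is dicritical and $\mc{F}$ admits a non-constant meromorphic first integral in a neighborhood of $p$. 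In particular the germ of $\mc{F}$ at $p$ has infinitely many separatrices.

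Next I would invoke the classification of degree two holomorphic foliations of $\pp{2}$ with a unique singular point, due to Cerveau, D\'eserti, Garba Belko and Meziani. Up to an automorphism of $\pp{2}$ this yields a finite list of normal forms, and for each model one decides whether the singular point is dicritical by resolving it and inspecting the separatrices (equivalently, whether some component of the exceptional divisor of its reduction is non-invariant). The dicriticality obtained in the first step, together with the existence of a local meromorphic first integral, discards every non-dicritical model and singles out, up to automorphism, the foliation defined in the affine chart by $\omega=x^2\,dx+y^2(x\,dy-y\,dx)=(x^2-y^3)\,dx+xy^2\,dy$.

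Finally I would verify the first integral by a direct computation. Homogenizing $\omega$ gives the projective $1$-form $\Omega=(x^2z-y^3)\,dx+xy^2\,dy-x^3\,dz$, which satisfies the Euler relation and whose only singular point is $[0:0:1]$, and one checks that $dR=x^{-4}\omega$ in the affine chart $z=1$, so that $\omega\wedge dR=0$ and $R=\dfrac{y^3-3x^2z}{3x^3}$ is a first integral for $\mc{F}$. Its generic level set $\{y^3-3x^2z=3c\,x^3\}$ is a cubic passing through $p=[0:0:1]$, which both exhibits the infinitely many separatrices (reconfirming that $p$ is dicritical) and shows that $p$ is the unique base point of the associated pencil of cubics.

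I expect the classification step to be the main obstacle: establishing that the listed normal forms exhaust, up to automorphism, all degree two foliations with a single singular point, and carrying out the blow-ups needed to detect dicriticality of each candidate, is the substantive part of the argument, whereas the application of Theorem \ref{main_theorem} and the verification of $R$ are routine.
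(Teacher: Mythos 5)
Your proposal is correct and follows essentially the same route as the paper: use $c_1^2(N_{\mc{F}})=(2+2)^2=16>0$ to apply Theorem \ref{main_theorem} at the unique singularity $p$, obtaining a non-constant meromorphic first integral there, then invoke the classification of Cerveau--D\'eserti--Garba Belko--Meziani of degree-two foliations of $\pp{2}$ with a single singular point, eliminate the other normal forms, and verify $R$ by direct computation. The only (minor) divergence is the elimination step: where you propose blow-up computations to test dicriticality of each model, the paper simply quotes from the same classification paper that models (2) and (3) admit only transcendental (exponential-type) first integrals and model (4) admits no meromorphic first integral at all, so the local meromorphic first integral alone singles out $\omega_1=x^2dx+y^2(xdy-ydx)$ without any resolution of singularities.
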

\begin{proof}
Let us assume that $\mc{F}$ has a unique singularity, say $p$. Applying Theorem \ref{main_theorem}  we have $\mc{F}$ has a non-constant meromorphic first integral $f/g$ in a neighborhood of $p$. Now since $\mc{F}$ has degree 2, one can apply the main theorem of \cite{deserti} which asserts that, up automorphism, $\mc{F}$ is given in affine coordinates $(x,y)\in\mathbb{C}^2$ by one of the following types:
\begin{enumerate}
\item $\omega_1=x^2dx+y^2(xdy-ydx)$;
\item $\omega_2=x^2dx+(x+y^2)(xdy-ydx)$;
\item $\omega_3=xydx+(x^2+y^2)(xdy-ydx)$;
\item $\omega_4=(x+y^2-x^2y)dy+x(x+y^2)dx$.
\end{enumerate}
The foliations given in (2) and (3) have first integral respectively 
$$\left(2+\frac{1}{x}+2\left(\frac{y}{x}\right)+\left(\frac{y}{x}\right)^2\right)\exp\left(-\frac{y}{x}\right)\,\,\,\text{and}\,\,\,\,\left(\frac{y}{x}\right)\exp\left(\frac{1}{2}\left(\frac{y}{x}\right)^2-\frac{1}{x}\right),$$ 
 and the foliation in (4) has no meromorphic first integral, see \cite{deserti}. Then $\mc{F}$ can only be the foliation induced by  
 $$\omega_1=x^2dx+y^2(xdy-ydx).$$
It is easy to check that $R(x,y,z)=\frac{y^3-3x^2z}{3x^3}$ is a rational first integral for $\mc{F}$. 
 \end{proof}
 \par An \textit{algebraizable singularity} is a germ of a singular holomorphic foliation which can be defined in some appropriated local chart by a differential equation with algebraic coefficients. It is proved in \cite{genzmer} the existence of countable many classes of saddle-node singularities which are not algebraizable. In \cite{casale}, Casale studied \textit{simple dicritical singularities}, these singularities are those that become nonsingular after one blow-up and such that a unique leaf is tangent to the exceptional divisor with tangency order of one. He show that a simple dicritical singularity with meromorphic first integral is algebraizable (cf. \cite[Theorem 1]{casale}). 
 \begin{theorem}[Casale]\label{Casale_theorem}
 If $\mathcal{F}$ is a simple dicritical foliation at $0\in\mathbb{C}^2$ with a meromorphic first integral then there exist an algebraic surface $S$, a rational function $H$ on $S$ and a point $p\in S$ such that $\mathcal{F}$ is biholomorphic to the foliation given by the level curves of $H$ in a neighborhood of $p$.
 \end{theorem}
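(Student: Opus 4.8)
The plan is to exploit the meromorphic first integral to present $\mc{F}$ as a pencil of analytic curves, to resolve the dicritical point so that the leaves become the fibres of a fibration over a rational curve, and then to show that the resulting transverse structure along that curve is algebraic; contracting the exceptional divisor will produce the desired algebraic model.

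First I would write the meromorphic first integral as $H=f/g$ with $f,g\in\mc{O}_2$ relatively prime, so that the leaves of $\mc{F}$ lie on the members $\{\lambda f+\mu g=0\}$, $[\lambda:\mu]\in\pp{1}$, of a pencil with base point $0$. Let $\pi$ be the blow-up of $0\in\mathbb{C}^2$, with exceptional divisor $E\cong\pp{1}$, and let $\widetilde{\mc{F}}=\pi^{*}\mc{F}$. By the \emph{simple dicritical} hypothesis $E$ is not invariant by $\widetilde{\mc{F}}$, the two being generically transverse, and $\widetilde{\mc{F}}$ has a single tangency with $E$, of order one. The lift $\widetilde{H}=H\circ\pi$ is a meromorphic first integral for $\widetilde{\mc{F}}$; after finitely many further blow-ups clearing the indeterminacy of $\widetilde{H}$ I obtain a holomorphic fibration $\widetilde{H}\colon\widetilde{U}\to\pp{1}$ whose fibres are the closures of the leaves of $\widetilde{\mc{F}}$, with the strict transform $D\cong\pp{1}$ of $E$ a non-invariant rational curve meeting the generic fibre in finitely many points.

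Next I would analyse the structure of $\widetilde{\mc{F}}$ along $D$. The restriction $\widetilde{H}|_{D}\colon D\to\pp{1}$ is a rational map of some degree $k$, equal to the number of points in which a generic leaf meets $D$; its critical points are exactly the tangencies of the leaves with $D$, so the hypothesis that the unique tangency has order one means that $\widetilde{H}|_{D}$ carries a single simple ramification point away from the singular locus, the remaining ramification being concentrated at the finite set $\Sigma\subset D$ of singular points of $\widetilde{\mc{F}}$ on $D$. Off $\Sigma$ the foliation is transverse to $D$ and defines a holonomy representation $\rho\colon\pi_1(D\setminus\Sigma)\to\dif(\mathbb{C},0)$; because $\widetilde{H}$ is a first integral, each local holonomy is periodic, and a Mattei--Moussu type argument makes the image of $\rho$ finite. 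Thus $\widetilde{\mc{F}}$ near $D$ is encoded by the branched covering $\widetilde{H}|_{D}$ together with the local models at the points of $\Sigma$ and at the tangency, and the simplicity hypothesis identifies all of these with explicit algebraic normal forms.

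Finally I would algebraize. By the Riemann existence theorem the degree-$k$ branched cover $\widetilde{H}|_{D}$ of $\pp{1}$ is an algebraic curve, and assembling it with the finitely many algebraic local models produces an algebraic surface $\widetilde{S}$ carrying a foliation $\widetilde{\mc{G}}$ and a rational function realizing $\widetilde{H}$, biholomorphic to $(\widetilde{U},\widetilde{\mc{F}})$ in a neighbourhood of $D$. Contracting $D$ by Castelnuovo's criterion yields an algebraic surface $S$, a point $p\in S$, a foliation $\mc{G}$ and a rational function $H$ on $S$ whose level curves define $\mc{G}$; composing the local biholomorphism with the contraction and with $\pi$ exhibits $\mc{F}$ as biholomorphic to $\mc{G}$ near $p$. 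The hard part will be this last step: organizing the local analytic models along the whole of $D$ into a single algebraic surface and checking that the first integral descends to a genuine rational function. This is exactly where the simplicity of the dicritical point (a unique tangency of order one) is indispensable, since it forces every local building block to be algebraic and so permits the gluing to be carried out within the algebraic category.
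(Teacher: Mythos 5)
The paper does not actually prove this statement: it is imported verbatim from Casale \cite{casale} (``Simple meromorphic functions are algebraic''), so your proposal has to stand on its own, and as written it does not. First, a structural error: for a simple dicritical point the blown-up foliation is regular in a neighborhood of $E$ (the unique order-one tangency sits at a regular point, and a singular point on a non-invariant $E$ would add to the tangency count), and since $H$ is constant on the leaves of a regular foliation near $E$, the lift $\widetilde{H}$ is already a holomorphic map to $\pp{1}$ there --- no ``further blow-ups clearing the indeterminacy'' are needed, and performing them is harmful, because afterwards $D$ is no longer a $(-1)$-curve and your final ``contract $D$ by Castelnuovo'' neither applies to $D$ alone nor returns a smooth point whose germ matches the original $(\mathbb{C}^2,0)$. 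Second, the holonomy representation $\rho\colon\pi_1(D\setminus\Sigma)\to\dif(\mathbb{C},0)$ does not exist: holonomy is defined along an \emph{invariant} curve, and $D$ is precisely not invariant --- leaves cross it transversally, so paths in $D$ do not lift to leaves. What one actually has is a multivalued leaf-correspondence on $D$ (follow a fiber of $\widetilde{H}$ from one of its $k$ intersections with $D$ to another), and taming that correspondence is content, not a ``Mattei--Moussu type argument'' (Mattei--Moussu concerns holomorphic first integrals of non-dicritical germs). Third, your dictionary ``critical points of $\widetilde{H}|_{D}$ are exactly the tangencies'' is false and even internally inconsistent: in the simple dicritical case $\Sigma=\emptyset$, so your accounting gives $\widetilde{H}|_{D}\colon\pp{1}\to\pp{1}$ of degree $k$ with total ramification $1$, contradicting Riemann--Hurwitz, which forces total ramification $2k-2$, an even number. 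The missing ramification occurs at transverse points over multiple fibers of $\widetilde{H}$ (locally $\widetilde{H}=\phi(x)$ with $\phi'$ vanishing along a whole leaf), a phenomenon your analysis omits and which already shows a first integral must be normalized (e.g.\ by a Stein factorization near $E$) before any such bookkeeping.

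The decisive gap, however, is the one you flag yourself: ``assembling the local models into an algebraic surface carrying a rational function biholomorphic to $(\widetilde{U},\widetilde{\mc{F}},\widetilde{H})$ near $D$'' \emph{is} the theorem. The germ of neighborhood of $D$ is not the issue --- by Castelnuovo--Grauert any neighborhood of a smooth rational $(-1)$-curve is standard --- and neither are the local models or the abstract branched cover (Riemann existence). The whole difficulty is that the pair consisting of this neighborhood together with the function $\widetilde{H}$, i.e.\ the gluing of the fold model at the tangency with the graph-like structure elsewhere along $D$, carries a priori infinite-dimensional analytic moduli, and one must show it can be conjugated into the algebraic category. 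Nothing in your sketch engages this; as written, the same words would ``prove'' algebraization of an arbitrary dicritical germ with meromorphic first integral, with simplicity invoked only as a slogan (``every local building block is algebraic'' is true for any finitely determined local model and buys nothing globally). Isolating how the single order-one tangency rigidifies the gluing is exactly the work done in \cite{casale}, and it is absent here.
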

 \par Similarly, our aim is give a result of algebraization of singularities for real-analytic Levi-flat hypersurfaces in compact complex manifolds of complex dimension two. 
\begin{corollary}
Let $\mc{F}$ be a holomorphic foliation on a compact complex manifold $X$ of complex dimension two tangent to an irreducible real-analytic Levi-flat hypersurface $M \subset X$ such that $\sing(\mc{F})\subset M$. Suppose that $c^2_1(N_{\mc{F}})>0$ and $\mc{F}$ has only a unique simple dicritical singularity $p\in X$. Then there exists an algebraic surface $V$, a rational
function $H$ on $V$ and a point $q\in V$ such that the germ of $M$  at $p$ is biholomorphic to a semialgebraic Levi-flat hypersurface $M'\subset V$ in a neighborhood of $q$.
\end{corollary}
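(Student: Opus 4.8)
The plan is to run Theorem \ref{main_theorem} to obtain a meromorphic first integral at $p$, then to algebraize the germ of $\mc F$ by Casale's Theorem \ref{Casale_theorem}, transport $M$ through the resulting conjugacy, and finally check that the transported hypersurface sits inside an algebraic Levi-flat one. First I would apply Theorem \ref{main_theorem}: the hypotheses that $\mc F$ is tangent to $M$, that $\sing(\mc F)\subset M$ and that $c_1^2(N_{\mc F})>0$ are exactly its hypotheses, so $\mc F$ admits a non-constant meromorphic first integral near a dicritical singularity. Since by assumption $p$ is the unique dicritical singularity of $\mc F$, this first integral lives at $p$; and since $p$ is moreover simple dicritical, Theorem \ref{Casale_theorem} produces an algebraic surface $V$, a rational function $H$ on $V$, a point $q\in V$ and a biholomorphism $\Phi$ from a neighbourhood of $q$ onto a neighbourhood of $p$ carrying the foliation $\mc G$ defined by the level curves of $H$ to the germ of $\mc F$ at $p$.

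Next I would transport the hypersurface. Setting $M'=\Phi^{-1}(M)$, the map $\Phi$ being biholomorphic shows that $M'$ is a germ of irreducible real-analytic Levi-flat hypersurface at $q$ to which the germ of $M$ at $p$ is biholomorphic, and that $\mc G$ is tangent to $M'$ because $\mc F$ is tangent to $M$. Consequently the Levi foliation of $M'$ is made of pieces of the fibres of $H$, so that $M'$ is a union of the level curves $\{H=c\}$; equivalently there is a germ of real-analytic function $\Psi(\zeta,\bar\zeta)$ along a real arc $\gamma$ in the target $\mathbb{P}^1$ of $H$ with $M'=\{\,\Psi(H,\bar H)=0\,\}$ near $q$.

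It remains to prove that $M'$ is semialgebraic, which is the heart of the matter and the step I expect to be the main obstacle. The idea is to exploit that $q$ is a dicritical point: after one blow-up the exceptional divisor $E\cong\mathbb{P}^1$ is generically transverse to $\mc G$ and the fibres of $H$ through $q$ are parametrised by $E$ through $H|_E$, so the lift of $M'$ meets $E$ along the real arc corresponding to $\gamma$. What constrains $\gamma$ is the real-analyticity of $M'$ at the singular point $q$ itself, not merely on its smooth locus: taking a real-analytic defining function of $M'$, expanding it into its bihomogeneous components along $E$ and extracting the lowest nonzero one, invariance of $M'$ under the dicritical pencil forces that component already to vanish on $M'$, and reading it in the coordinate $\zeta=H$ yields a real polynomial relation $R(\zeta,\bar\zeta)=0$ whose real-algebraic zero curve contains $\gamma$. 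Clearing the denominators of $H$ in $R(H,\bar H)=0$ then gives a real-algebraic equation on $V$ cutting out an algebraic Levi-flat hypersurface that contains $M'$. Hence $M'$ is semialgebraic and biholomorphic to the germ of $M$ at $p$, as required. The delicate point throughout is this promotion of the a priori only real-analytic arc $\gamma$ to a real-algebraic one; the homogeneous case, a radial dicritical germ where the argument is transparent, suggests the blow-up reduction above, but carrying it out for a general simple dicritical germ is where the care is needed.
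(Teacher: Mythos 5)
Your first two steps coincide exactly with the paper's proof: Theorem \ref{main_theorem} applies verbatim and yields a non-constant meromorphic first integral at a dicritical singularity, the uniqueness hypothesis places that singularity at $p$, and Casale's Theorem \ref{Casale_theorem} then produces $V$, $H$, $q$ and the conjugating biholomorphism; transporting $M$ to $M'=\Phi^{-1}(M)$ is implicit in the paper as well. The divergence is the final step, semialgebraicity of $M'$, and there your argument has a genuine gap --- one you partly concede yourself. The paper does not prove this step by hand: it invokes \cite[Lemma 5.2]{lebl}, which supplies exactly the missing conclusion, namely a real-algebraic Levi-flat hypersurface $N\subset V$ containing the transported germ $M'$ (the point of that lemma being precisely that a Levi-flat hypersurface whose Levi leaves lie in level curves of a rational function is contained in a real-algebraic Levi-flat hypervariety). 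With that citation the proof ends in one line; you instead attempt to reprove the lemma, and the attempt as written does not go through.

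Concretely, two steps of your sketch fail. First, the claim that ``invariance of $M'$ under the dicritical pencil forces the lowest nonzero homogeneous component of a defining function $\rho$ to vanish on $M'$'' is false in general: that inference requires a one-parameter scaling group whose orbits stay inside the leaves, which exists for the radial pencil (your transparent case) but not for a general simple dicritical germ --- and the lowest component $\rho_{k_0}$ only controls the \emph{tangent cone} of $\{\rho=0\}$ at $q$, not $M'$ itself, which is a union of curved leaves on which $\rho_{k_0}$ has no reason to vanish away from $q$. Second, even granting that vanishing, ``reading it in the coordinate $\zeta=H$'' is a non sequitur: $\rho_{k_0}$ is a polynomial in the ambient variables $(z,\bar z)$, not a function constant on the fibres of $H$, so it defines no relation $R(\zeta,\bar\zeta)=0$ at all. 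The salvageable core of your idea is weaker and is essentially what underlies Lebl's lemma: each leaf closure $\{H=c\}$, $c\in\gamma$, passes through $q$ and lies in $\{\rho=0\}$, hence $\rho_{k_0}$ vanishes on the tangent cone of each such curve at $q$; after one blow-up these tangent directions form the set $(\tilde H|_{E})^{-1}(\gamma)$ in the exceptional divisor $E\cong\pp{1}$ (with $\tilde H$ the lift of $H$), which is therefore contained in the proper real-algebraic subset $\{\rho_{k_0}=0\}\cap E$; pushing forward by the non-constant rational map $\tilde H|_{E}$ (Tarski--Seidenberg, then Zariski closure) places $\gamma$ inside a real-algebraic curve $\Gamma\subset\pp{1}$, and $\overline{H^{-1}(\Gamma)}$ is then the required real-algebraic Levi-flat hypervariety containing $M'$. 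Either carry out this corrected argument or cite \cite[Lemma 5.2]{lebl} as the paper does; as it stands, your semialgebraicity step is not proved.
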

\begin{proof}
Since $p\in X$ is the unique singularity of $\mc{F}$, we have $\mc{F}$  has a non-constant meromorphic first integral in a neighborhood of $p$ by Theorem \ref{main_theorem}. Then it is follows from Theorem \ref{Casale_theorem} that there exist an algebraic surface $V$, a rational function $H$ on $V$ and a point $q\in V$ such that $\mc{F}$ is biholomorphic to the foliation given by level curves of $H$ in a neighborhood of $q$. According to \cite[Lemma 5.2]{lebl}, there exists a real-algebraic subvariety Levi-flat $N\subset V$ (of real dimension three) such that $M$ is biholomorphic to a subset $M'\subset N$. Thus $M'$ is semialgebraic.
\end{proof}
\section{Dicritical singularities of Levi-flat hypersurfaces in presence of compact leaves}\label{dicritical}
We use Camacho-Sad's formula (Theorem \ref{CS}) and Theorem \ref{lins-cerveau} to prove the following result.

\begin{theorem}\label{variotional}
Let $\mc{F}$ be a holomorphic foliation on a compact complex manifold $X$ of complex dimension two tangent to an irreducible real-analytic Levi-flat hypersurface $M \subset X$. Suppose that $C\cdot C>0$, where $C\subset M$ is an irreducible compact complex curve invariant by $\mc{F}$, then there exists a dicritical singularity $p \in \sing (\mc{F})\cap C$ such that $\mc
{F}$ has a non-constant meromorphic first integral at $p$.
\end{theorem}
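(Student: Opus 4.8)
The plan is to mirror the proof of Theorem \ref{main_theorem}, replacing the Baum-Bott formula by the Camacho-Sad formula (Theorem \ref{CS}) applied to the invariant compact curve $C$. Arguing by contradiction, I would assume that every singularity of $\mc{F}$ lying on $C$ is non-dicritical. Since $C\subset M$ and $\mc{F}$ is tangent to $M$, at each point $p\in\sing(\mc{F})\cap C$ the germ of $\mc{F}$ is tangent to the germ of the irreducible real-analytic Levi-flat hypersurface $M$; hence by Theorem \ref{lins-cerveau}(2) it admits a non-constant holomorphic first integral $g\in\mc{O}_2$. The goal is then to show that each local index $\cs(\mc{F},C,p)$ is non-positive, so that the Camacho-Sad formula forces $C\cdot C=\sum_{p}\cs(\mc{F},C,p)\leq 0$, contradicting the hypothesis $C\cdot C>0$. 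I note that, unlike Theorem \ref{main_theorem}, no global hypothesis $\sing(\mc{F})\subset M$ is needed, since the relevant singularities already lie on $C\subset M$.

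The central step is an analogue of Lemma \ref{baum-bott_signo} for the Camacho-Sad index along $C$. Writing the first integral at $p$ as $g=g_1^{\ell_1}\cdots g_k^{\ell_k}$ with $\ell_i>0$, the separatrices of $\mc{F}$ at $p$ are exactly the branches $C_i=\{g_i=0\}$, and the germ of $C$ at $p$ is a union $\bigcup_{i\in J}C_i$ over some index set $J$, because $C$ is invariant and $\mc{F}$ is non-dicritical. As in the proof of Lemma \ref{baum-bott_signo}, from $dg=h\omega$ one obtains
$$\sum_{i}\ell_i\Big(\prod_{m\neq i}g_m\Big)dg_i=h_1\omega,$$
so that $\mc{F}$ is the foliation defined by the closed logarithmic $1$-form $\sum_i\ell_i\,dg_i/g_i$. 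For a single smooth branch this yields $\cs(\mc{F},C_j,p)=-\sum_{i\neq j}(\ell_i/\ell_j)[g_i,g_j]_p$, where $[g_i,g_j]_p$ is the intersection multiplicity; I would verify this first in the elementary model $g=x^ay^b$ and then reduce the general smooth-branch case to it.

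The main obstacle is to control the index when the germ of $C$ at $p$ is reducible or singular, i.e.\ when $J$ has more than one element. Here I would invoke the decomposition formula
$$\cs\Big(\mc{F},\textstyle\bigcup_{i\in J}C_i,p\Big)=\sum_{i\in J}\cs(\mc{F},C_i,p)+2\!\!\sum_{\substack{i,j\in J\\ i<j}}[g_i,g_j]_p,$$
which introduces the positive cross terms $2[g_i,g_j]_p$. Substituting the single-branch formula and grouping the contribution of each unordered pair $\{i,j\}\subset J$ produces $\big(2-\ell_i/\ell_j-\ell_j/\ell_i\big)[g_i,g_j]_p=-\frac{(\ell_i-\ell_j)^2}{\ell_i\ell_j}[g_i,g_j]_p\leq 0$, exactly the expression appearing in Lemma \ref{baum-bott_signo}, while the remaining terms coming from branches $C_i$ with $i\notin J$ are manifestly non-positive. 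This bookkeeping is the delicate point, and the payoff is the inequality $\cs(\mc{F},C,p)\leq 0$ at every non-dicritical $p\in C$.

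With the sign lemma in hand, the Camacho-Sad formula gives $C\cdot C=\sum_{p\in\sing(\mc{F})\cap C}\cs(\mc{F},C,p)\leq 0$, contradicting $C\cdot C>0$. Hence some $p\in\sing(\mc{F})\cap C$ must be dicritical, and a final application of Theorem \ref{lins-cerveau}(1) furnishes a non-constant meromorphic first integral for $\mc{F}$ in a neighborhood of $p$, completing the argument.
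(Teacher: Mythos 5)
Your proposal is correct and follows the same skeleton as the paper's proof: argue by contradiction, use Theorem \ref{lins-cerveau} to produce a local holomorphic first integral at each non-dicritical singularity on $C$, show $\cs(\mc{F},C,p)\leq 0$ there, and contradict $C\cdot C>0$ via Theorem \ref{CS}. The one place you genuinely diverge is in handling the case where the germ of $C$ at a singular point is locally reducible: the paper writes $C\cap U=\{f=0\}$ and asserts $f=g_i$ for a \emph{single} index $i$, i.e.\ it implicitly assumes $C$ has one branch at each singularity (which need not hold for a globally irreducible curve, e.g.\ at a nodal point), and its formula $\cs(\mc{F},C,q)=-\sum_{j\neq i}\frac{\ell_j}{\ell_i}[g_i,g_j]_q$ is stated only for that situation. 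Your use of the decomposition formula $\cs\big(\mc{F},\bigcup_{i\in J}C_i,p\big)=\sum_{i\in J}\cs(\mc{F},C_i,p)+2\sum_{i<j}[g_i,g_j]_p$, with each cross term absorbed into $-\frac{(\ell_i-\ell_j)^2}{\ell_i\ell_j}[g_i,g_j]_p\leq 0$ exactly as in Lemma \ref{baum-bott_signo}, covers this case and in fact repairs a small gap in the published argument. One simplification: your restriction to \emph{smooth} branches in the single-branch formula is unnecessary, since writing $\omega=\ell_j\big(\prod_{m\neq j}g_m\big)dg_j+g_j\eta_j$ and computing the residue gives $\cs(\mc{F},C_j,p)=-\sum_{i\neq j}\frac{\ell_i}{\ell_j}\cdot\frac{1}{2\pi i}\int_{\partial C_j}\frac{dg_i}{g_i}=-\sum_{i\neq j}\frac{\ell_i}{\ell_j}[g_i,g_j]_p$ for an arbitrary branch, smooth or not. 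Your observation that no global hypothesis $\sing(\mc{F})\subset M$ is needed, only that the singularities on $C\subset M$ matter, also agrees with the paper's statement.
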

\begin{proof}
Suppose by contradiction that all the singularities of $\mc{F}$ over $C$ are non-dicritical. 
Take any point $q\in\sing (\mc{F})\cap C$ and let $U$ be a neighborhood of $q$ in $X$  such that $C\cap U=\{f=0\}$, $\mc{F}$ is represented by a holomorphic 1-form $\omega$ on $U$ and $q$ is an isolated singularity of $\omega$. Since $\mc{F}$ and $M$ are tangent in $U$, we have $\mc{F}|_{U}$ admits a holomorphic first integral $g\in\mc{O}(U)$, that is, $\omega\wedge dg=0$ on $U$, by Theorem \ref{lins-cerveau}. Then $dg=h\omega$, where $h\in\mc{O}(U)$. If 
$g=g^{\ell_1}_1\cdots g^{\ell_k}_{k}$, we get $f=g_i$ for some $i$ and 
$$\sum^{k}_{j=1}\ell_jg_1\cdots\widehat{g_j}\cdots g_k dg_j=h_1\omega,$$
where $h_1=\frac{h}{g^{\ell_1-1}_1\cdots g^{\ell_k-1}_k}$. It follows that, 
$$CS(\mc{F},C,q)=-\sum_{i\neq j}\frac{\ell_{j}}{\ell_i}[g_i,g_j]_q$$
where $[g_{i},g_j]_{q}$ denotes the intersections number between the curves $\{g_{i}=0\}$ and $\{g_{j}=0\}$ at $q$. In particular, 
$$CS(\mc{F},C,q)\leq 0\,\,\,\,\,\text{for any}\,\,q\in\sing (\mc{F})\cap C. $$
It follows from Theorem \ref{CS} that 
$$\sum_{q\in\sing{\mc{F}}\cap C}CS(\mc{F},C,q)=C\cdot C.$$
But this is a contradiction with $C\cdot C> 0.$
Now since $p\in M$, we can apply again Theorem \ref{lins-cerveau} to find a meromorphic first integral for $\mc{F}$ in a neighborhood of $p$. 
\end{proof}

\section{Examples}\label{examples_paper}
First we give an example where the hypotheses of Theorem \ref{main_theorem} are satisfied.  
\begin{example}
The canonical local example of a real-analytic Levi-flat hypersurface in $\mathbb{C}^2$ is given by $\im z_1=0$. This hypersurface can be extended  to all $\mathbb{P}^2$ given by 
$$M=\{[Z_1:Z_2:Z_3]\in\mathbb{P}^2: Z_1\bar{Z}_3-\bar{Z}_1Z_3=0\}.$$
Moreover, this hypersurface is tangent to holomorphic foliation $\mathcal{F}$ given by the level of rational function $Z_1/Z_3$. Note also that $\mathcal{F}$ has degree $0$ and therefore $c^2_1(N_\mathcal{F})=c^2_1(\mathcal{O}_{\mathbb{P}^2}(2))=4$. The foliation $\mathcal{F}$ has a dicritical singularity at $[0:1:0]\in M$.
\end{example}
We now give two examples where Theorem \ref{main_theorem} is false. 
\begin{example}
Consider the Hopf surface $X=(\mathbb{C}^2-\{0\})/\Gamma_{a,b}$ induced by the infinite cyclic subgroup of $GL(2,\mathbb{C})$ generated by the transformation $(z_1,z_2)\mapsto(az_1,bz_2)$ with $|a|,|b|>1$. Levenberg-Yamaguchi \cite{Levenberg} proved that the domain $$D=\{(z_1,z_2)\cdot\Gamma_{a,b}|\,z_1\in\mathbb{C}, \im\,\,z_2>0\}$$ in $X$ with $b\in\mathbb{R}$ is Stein. Furthermore it is bounded by a real-analytic Levi-flat hypersurface 
$$M=\{(z_1,z_2)\cdot\Gamma_{a,b}|\,z_1\in\mathbb{C}, \im\,\,z_2=0\}.$$ 
It is clear that the levels of the holomorphic function $f(z_1,z_2)=z_2$ on $\mathbb{C}^2-\{0\}$ defines a holomorphic foliation $\mc{F}$ on $X$ tangent to $M$. Since any line bundle on $X$ is flat \cite{mall} we have $c^2_1(N_\mc{F})=0$ and $M$ has no singularities in $X$.
\end{example}

\begin{example}
Let $X=\mathbb{P}^1\times\mathbb{P}^1$ and let $\mc{F}$ be the foliation given by the vertical fibration on $X$.  
Now let $\pi:X\to\mathbb{P}^1$ be the projection on the first coordinate and let $M=\pi^{-1}(\gamma)$, where $\gamma$ is a real-analytic embedded loop in $\mathbb{P}^1$. Take a fiber $F\subset M$. We have $F$ is isomorphic to $\mathbb{P}^1$ with $F^2=0$.
Clearly $M$ is a Levi-flat hypersurface in $X$ tangent to $\mc{F}$ and it has no dicritical singularities in $F$.  
\end{example}


\vskip 0.2 in

\noindent{\it\bf Acknowledgments.--}
The authors wishes to express his thanks to Alcides Lins Neto (IMPA) for several helpful comments during the preparation of the paper. Also, we would like to thank the referee for suggestions and pointing out corrections.

\end{document}